\newtheorem{theorem}{Theorem}
\newtheorem{lemma}[theorem]{Lemma}
\newtheorem{proposition}[theorem]{Proposition}
\newtheorem{problem}[theorem]{Problem}
\theoremstyle{exercise}
\theoremstyle{definition}
\newtheorem{definition}[theorem]{Definition}
\newtheorem{notation}[theorem]{Notation}
\newtheorem{conjecture}[theorem]{Conjecture}
\numberwithin{equation}{section}
\numberwithin{theorem}{section}
\newenvironment{proofof}[1]{\noindent{\bf Proof of #1.}\hspace*{1em}}{\qed\bigskip}
\newcommand{\intav}[1]{\mathchoice {\mathop{\vrule width 6pt height 3 pt depth  -2.5pt
\kern -8pt \intop}\nolimits_{\kern -6pt#1}} {\mathop{\vrule width
5pt height 3  pt depth -2.6pt \kern -6pt \intop}\nolimits_{#1}}
{\mathop{\vrule width 5pt height 3 pt depth -2.6pt \kern -6pt
\intop}\nolimits_{#1}} {\mathop{\vrule width 5pt height 3 pt depth
-2.6pt \kern -6pt \intop}\nolimits_{#1}}}
\newcommand{\intavl}[1]{\mathchoice {\mathop{\vrule width 6pt height 3 pt depth  -2.5pt
\kern -8pt \intop}\limits_{\kern -6pt#1}} {\mathop{\vrule width 5pt
height 3  pt depth -2.6pt \kern -6pt \intop}\nolimits_{#1}}
{\mathop{\vrule width 5pt height 3 pt depth -2.6pt \kern -6pt
\intop}\nolimits_{#1}} {\mathop{\vrule width 5pt height 3 pt depth
-2.6pt \kern -6pt \intop}\nolimits_{#1}}}
\newcommand{\R}{\mathbb{R}}
\newcommand{\Z}{\mathbb{Z}}
\newcommand{\eps}{\epsilon}
\newcommand{\EE}[2]{{\mathbb{E}}\left[{#1}|{#2}\right]}
\newcommand{\CR}[1]{{\rm CR}\left({#1}\right)}
\newcommand{\dist}[2]{{\rm dist}\left({#1},{#2}\right)}
\begin{document}

\title[A Class of Multi-particle  Reinforced  Interacting Random Walks]{A Class of Multi-particle  Reinforced \\  Interacting Random Walks}

%    Information for first author
\author[Jun Chen]{Jun Chen}
%    Address of record for the research reported here
\address{Weizmann Institute of Science, Faculty of Mathematics and Computer Science, POB 26, 76100, Rehovot, Israel.}
\email{jun.chen@weizmann.ac.il}

%    Information for second author
%\author[C]{C}

%\thanks{is supported by the European Research Council, grant 239885.}

%    General info
\subjclass[2010]{Primary: 60K35. Secondary: 37C10.}

\date{\today}

\keywords{Reinforced random walk, stochastic approximation algorithms, dynamical approach, chain recurrent set}

\begin{abstract}
We consider a class of multi-particle reinforced interacting random walks. In this model, there are some (finite or infinite)
particles performing random walks on a given (finite or infinite) connected graph, so that each particle has higher probability to visit
neighboring vertices or edges which have been seldom visited by the other particles. Specifically we investigate two particles' vertex-reinforced
interacting random walks on finite complete graphs. By a dynamical approach we prove that the two particles' occupation measure asymptotically has
small joint support almost surely if the reinforcement is strong.
\end{abstract}

\maketitle

%%%%%%%%%%%%%%%%%%%%%%%%%%%%%%%%%%%%%%%%%%%%%%%%%%%%%%%%%%%%%%%%%%%%%%%%%%%%%%%%%%%%%%%%%%
\section{Introduction}
%%%%%%%%%%%%%%%%%%%%%%%%%%%%%%%%%%%%%%%%%%%%%%%%%%%%%%%%%%%%%%%%%%%%%%%%%%%%%%%%%%%%%%%%%%
``Reinforced random walk" (RRW) is a class of non-Markovian random walk, which has been extensively studied in the last twenty years,  see the survey ~\cite{pemantle2007survey}. RRW is defined on the vertices of an undirected graph in such a way that the probability of a transition from one vertex to another depends on the number of previous transitions along the neighboring edges (respectively, vertices). We refer to such models as ``edge-reinforced random walk" (ERRW) (respectively, ``vertex-reinforced random walk" (VRRW)).\\

RRWs can be defined on all kinds of graphs, either finite or infinite. It also can be defined  by very different reinforcement mechanisms through the weight functions, which could lead to quite different phenomena to occur. A question of interest is the recurrence or transience of RRW. For example, Angel et al. ~\cite{ANGEL2012LOCALIZATION} and Sabot et al. ~\cite{sabot2012recurrent} independently showed that linearly ERRW on any graph with bounded degrees is recurrent. Another phenomenon of interest is localization. For example, Pemantle and  Volkov ~\cite{pemantle1999vertex} showed that VRRW on $\mathbb{Z}$ has finite range, and then Tarr\'{e}s ~\cite{tarres2004vertex} showed that VRRW on $\mathbb{Z}$ eventually gets stuck on five points. Phase transitions are also of interest. For instance, Pemantle ~\cite{pemantle1988phase} showed that ERRW on infinite binary tree can vary from transient to recurrent, depending on the value of an adjustable parameter  measuring the strength of the feedback (reinforcement), and Volkov ~\cite{volkov2006phase} showed that VRRW on $\mathbb{Z}$ with weight function $k^\alpha$ will just visit  $2, 5, \infty$ sites after a large time $T_0$ when $\alpha>1,=1$ or $<1$ respectively. \\

So far, almost all the considered RRW models are one particle's self-interacting with positive (attractive) feedback (reinforcement), i.e.\ the edges (or vertices) already being visited more are more likely to be visited in the future. In analyzing such models, four main methods are commonly used: exchangeability ~\cite{kovchegov2008multi,pemantle2007survey}, branching process embedding ~\cite{davis1990reinforced,limic2003attracting,LIMIC2007ATTRACTING, sellke1994reinforced}, stochastic approximation via martingale methods ~\cite{HILL1980STRONG, robbins1951stochastic}, and dynamical system approach~\cite{benaim1996dynamical, benaim1999dynamics}.  \\

One direction to generalize RRW is to consider multi-particle RRW. Kovchegov ~\cite{kovchegov2008multi} showed for the two particles' edge reinforced process on $\mathbb{Z}$, the two particles meet infinitely often a.s.. In his model, each particle's transition probability is positively reinforced (determined) by both particles' visits on the edges. Except for his paper, the generalization of RRW to multi-particle RRW models with more general reinforced interacting mechanism hasn't appeared yet. In 2010, Itai Benjamini proposed a class of new multi-particle reinforced interacting random walks, and finally our paper is an exploration of that model. \\

%%%%%%%%%%%%%%%%%%%%%%%%%%%%%%%%%%%%%%%%%%%%%%%%%%%%%%%%%%%%%%%%%%%%%%%%%%%%%%%%%%%%%%%%%%
\section{The model and statement of result}  \label{definition_of_the_model}
%%%%%%%%%%%%%%%%%%%%%%%%%%%%%%%%%%%%%%%%%%%%%%%%%%%%%%%%%%%%%%%%%%%%%%%%%%%%%%%%%%%%%%%%%%

We will define a class of two particles' vertex-reinforced interacting random walks on a connected graph. Denote the two particles by $X$ and $Y$,
and the graph by $G=(V,E)$. At each step, both $X$ and $Y$ will jump to some vertices on $V$. Let $X_k, Y_k$ be $X,Y$'s locations at time $k$ on $V$,
and $N(X,v,n),N(Y,v,n)$ be the number of $X,Y$'s visits to vertex $v$ by time $n$. We also assume that $N(X,v,0)=N(Y,v,0)=1$ for any $v\in V$.
Denote the natural filtration generated by $\{X_k,0\le k \le n\}$ and $\{Y_k,0\le k \le n\}$ by $\mathcal{F}_n(n\in \mathbb{N})$.
Then the stochastic process $(X_n, Y_n)$'s transition probability is defined
\begin{equation} \label{Xmove}
\mathbb{P}(X_{n+1}=v|\mathcal{F}_n)=\frac{1_{v\sim X_n}\cdot w_{N (Y,v,n)}}{\sum_{p\sim X_n} w_{N (Y,p,n)}}
\end{equation}
and
\begin{equation}  \label{Ymove}
\mathbb{P}(Y_{n+1}=v|\mathcal{F}_n)=\frac{1_{v\sim Y_n}\cdot w_{ N (X,v,n)}}{\sum_{p\sim Y_n} w_{N (X,p,n)}},
\end{equation}
where ``$\sim$" represents some vertex relation on $G$ (e.g. nearest-neighbor), and $w_k(k\in \Z_+)$ is a fixed sequence of positive numbers,
 referred to as ``weights".  One natural weight sequence to work with is the nonlinear one $w_k=k^{-\alpha}$ for some $\alpha>0$.

On a finite connected graph $G=(V,E)$ with $V=\{1, \ldots, d\}$, one can set
\begin{equation} \label{randomprocess}
x_i(n)=\frac{N(X,i,n)}{n+d},\ y_i(n)=\frac{N(Y,i,n)}{n+d},\ \forall i\in V
\end{equation}
as $X$ and $Y$'s \emph{empirical occupation measure} on  $V$ by time $n$. Then if $w_k$ (like $k^{-\alpha}$) is homogenous in $k$,
we can rewrite (\ref{Xmove}) and (\ref{Ymove}) as
\begin{equation} \label{measureXmove}
\mathbb{P}(X_{n+1}=i|\mathcal{F}_n)=\frac{1_{i\sim X_n}\cdot \varpi(y_i(n))}{\sum_{j\sim X_n} \varpi(y_j(n))}
\end{equation}
and
\begin{equation}  \label{measureYmove}
\mathbb{P}(Y_{n+1}=i|\mathcal{F}_n)=\frac{1_{i\sim Y_n}\cdot \varpi( x_i(n))}{\sum_{j\sim Y_n} \varpi(x_j(n))},
\end{equation}
where $\varpi(x)$ is a function $\varpi: [0,1] \to \R_{\ge 0}$. Hence, $X,Y$'s transition probabilities are defined by their occupation measure.

In this paper, we will work with the model defined by (\ref{measureXmove}) and (\ref{measureYmove}) on finite complete graphs with
$\varpi(x)=\left[\delta 1_{x\le \delta}+x 1_{x>\delta}\right]^{-\alpha}$ for some fixed $\delta, \alpha>0$. Here we assume $\delta$ is small enough
and the vertex relation ``$\sim$" is that the graph distance between two vertices is less than or equal to 1, i.e. $X,Y$ are lazy random walks.
Then for any $i,j\in V$, (\ref{measureXmove}) and (\ref{measureYmove}) become
\begin{equation} \label{completemeasureXmove}
\mathbb{P}(X_{n+1}=i|\mathcal{F}_n)=\frac{ \left[\delta 1_{y_i(n)\le \delta}+y_i(n) 1_{y_i(n)>\delta}\right]^{-\alpha}}{\sum_{k=1}^d \left[\delta 1_{y_k(n)\le \delta}+y_k(n) 1_{y_k(n)>\delta}\right]^{-\alpha}}
\end{equation}
and
\begin{equation}  \label{completemeasureYmove}
\mathbb{P}(Y_{n+1}=j|\mathcal{F}_n)=\frac{ \left[\delta 1_{x_j(n)\le \delta}+x_j(n) 1_{x_j(n)>\delta}\right]^{-\alpha}}{\sum_{k=1}^d \left[\delta 1_{x_k(n)\le \delta}+x_k(n) 1_{x_k(n)>\delta}\right]^{-\alpha}}.
\end{equation}
Denote $x(n)=(x_1(n),\ldots,x_d(n))$ and $y(n)=(y_1(n), \ldots, y_d(n))$. Set a $2d$ dimensional vector
\begin{equation} \label{main random process}
z(n)=(x(n),y(n))=(x_1(n),\ldots,x_d(n), y_1(n),\ldots,y_d(n)).
\end{equation}
Notice that $z(n)$ is a Markov chain living in $\R^{2d}$. Then we will show $z(n)$'s asymptotic behavior.

Here we want to mention that when $\min_{i,j\in V} \{x_i(n),y_j(n)\}>\delta$, (\ref{completemeasureXmove}) and (\ref{completemeasureYmove})
 are equivalent to the following formulas
\begin{equation} \label{XXmove}
\mathbb{P}(X_{n+1}=i|\mathcal{F}_n)=\frac{N (Y,i,n)^{-\alpha}}{\sum_{k=1}^d N (Y,k,n)^{-\alpha}}, \quad \forall i\in V
\end{equation}
\begin{equation}  \label{YYmove}
\mathbb{P}(Y_{n+1}=j|\mathcal{F}_n)=\frac{N (X,j,n)^{-\alpha}}{\sum_{k=1}^d N (X,k,n)^{-\alpha}}, \quad \forall j \in V,
\end{equation}
which can be easily seen as a class of two particles' \emph{repelling} interacting random walks with nonlinear reinforcement.

Throughout the paper, we use the following notation.

\begin{notation}  \label{notation_collection}
\begin{enumerate}
\item  Let $\Delta$ be the closed $(d-1)-$dimensional  simplex
\[\Delta=\left\{ u\in \R^d:u_i\ge 0, \ \sum_{i\in V} u_i=1 \right\}.
\]
Denote the relative interior of $\Delta$ by $\overset{\circ}{\Delta}$.
\item Let $D$ be the product of two simplices $\Delta \times \Delta$
\[D=\left\{ (u,v)\in \R^{2d}:u_i\ge 0,\ \sum_{i\in V} u_i=1 \ \text{and} \ v_i\ge 0, \ \sum_{i\in V} v_i=1 \right\}.
\]
Denote the relative interior of $D$ by $\overset{\circ}{D}$, and the boundary of $D$ by $\partial D$ ;
\item Let $TD$ be the set identified with the tangent space to $D$ at each point
\[TD=T(\Delta\times \Delta)=\left\{(u,v)\in \mathbb{R}^{2d},\ \sum_{i\in V}u_i=0, \ \sum_{i\in V}v_i=0 \right\}.
\]
\item Let $U$ be the $d-$dimensional vector $(1/d, \ldots, 1/d)$. We also call $U$ the uniform distribution.
\item Let $\Vert \cdot \Vert$ denote the $L^1$ norm on $\mathbb{R}^{2d}$  defined by $\Vert v \Vert=\sum_{i=1}^{2d} |v_i|$.
\end{enumerate}
\end{notation}

Then we have the following theorem for any small positive $\delta$ appearing in the definition of $\varpi(x)$.

\begin{theorem} \label{main theorem}
For any fixed $d\ge 3\in \mathbb{N}$, there exists some $\alpha(d)$ independent of $\delta$, s.t. when $\alpha\ge \alpha(d)$,
 the two components $x(n)$ and $y(n)$ of $z(n)$ in (\ref{main random process}) asymptotically have small joint support bounded by $4\delta$ almost surely,
  i.e.
\[\mathbb{P} \left\{ \exists n_0, \bigcap_{n\ge n_0}\left\{\sum_{i=1}^d x_i(n)y_i(n)<4\delta\right\} \right\}=1.
\]
\end{theorem}

%\begin{remark}
%In this paper, for any fixed $d\ge 3\in \mathbb{N}$, we say $\alpha$ is large enough, if the following two conditions hold:
%\begin{enumerate}
%\item Proposition \ref{proposition1} holds for this $\alpha$;
%\item $4-3d^{\frac{1}{\alpha+1}}>0$, i.e. $\alpha>\log d/ \log \frac{4}{3}$.
%\end{enumerate}
%The second condition is needed in the proof of Lemma \ref{derivativenegative}.
%\end{remark}

%%%%%%%%%%%%%%%%%%%%%%%%%%%%%%%%%%%%%%%%%%%%%%%%%%%%%%%%%%%%%%%%%%%%%%%%%%%%%%%%%%%%%%%%%%
%\textbf{Motivation of the model}
%%%%%%%%%%%%%%%%%%%%%%%%%%%%%%%%%%%%%%%%%%%%%%%%%%%%%%%%%%%%%%%%%%%%%%%%%%%%%%%%%%%%%%%%%%

%%%%%%%%%%%%%%%%%%%%%%%%%%%%%%%%%%%%%%%%%%%%%%%%%%%%%%%%%%%%%%%%%%%%%%%%%%%%%%%%%%%%%%%%%%
%\textbf{Organization of the paper}
%%%%%%%%%%%%%%%%%%%%%%%%%%%%%%%%%%%%%%%%%%%%%%%%%%%%%%%%%%%%%%%%%%%%%%%%%%%%%%%%%%%%%%%%%%

The organization of the rest of this paper is as follows:
In Section \ref{stochastic approximation algorithms}, we will show that $z(n)$ belongs to a class of \emph{stochastic approximation algorithms}. In
Section \ref{the dynamical approach}, we will introduce the \emph{dynamical approach} and conclude that the limit set of $z(n)$ is contained in the \emph{chain recurrent set}
of a semiflow induced by an ordinary differential equation (ODE). In Section \ref{proof of main theorem}, we will prove Theorem
\ref{main theorem}. In Section \ref{further problems}, some further open problems are proposed.
%%%%%%%%%%%%%%%%%%%%%%%%%%%%%%%%%%%%%%%%%%%%%%%%%%%%%%%%%%%%%%%%%%%%%%%%%%%%%%%%%%%%%%%%%%
\section{Stochastic approximation algorithms} \label{stochastic approximation algorithms}
%%%%%%%%%%%%%%%%%%%%%%%%%%%%%%%%%%%%%%%%%%%%%%%%%%%%%%%%%%%%%%%%%%%%%%%%%%%%%%%%%%%%%%%%%%
In general, a {\it stochastic approximation algorithm} is a discrete time stochastic process whose form can be written as
\begin{equation} \label{stochastic approximation}
z(n+1)-z(n)=\gamma_n H(z(n),\xi(n))
\end{equation}
where $H:\R^m\times\R^k\to\R^m$ is a measurable function that characterizes the algorithm, $\{z(n)\}_{n\ge 0}\subset\R^m$
is the sequence of parameters to be recursively updated, $\{\xi(n)\}_{n\ge 0}\subset\R^k$ is a sequence of random
variables defined on some probability space $(\Omega, \mathcal{G}, \mathbb{P})$, and $\{\gamma_n\}_{n\ge 0}$ is a sequence of ``small"
nonnegative numbers. Such processes were first introduced in the early 50s in the works of Robbins and Monro~\cite{robbins1951stochastic}
and Kiefer and Wolfowitz~\cite{kiefer1952stochastic}.

To show that $z(n)$ in (\ref{main random process})  is a stochastic approximation algorithm, we need to show $z(n)$
satisfies a difference equation of the form (\ref{stochastic approximation}). Observe that  from (\ref{randomprocess})
\begin{eqnarray} \label{differenceeqn}
x_i(n+1)-x_i(n) &=& \frac{N(X,i,n)+1_{X_{n+1}=i}}{n+1+d}-\frac{N(X,i,n)}{n+d} \nonumber \\
            &=& \frac{-x_i(n)+1_{X_{n+1}=i}}{n+1+d}.
\end{eqnarray}
Similarly, we can derive a difference equation for $y_i(n)$. So $z(n)$ satisfies (\ref{stochastic approximation}) with
\begin{equation} \label{definition xin}
\gamma_n= \frac{1}{n+1+d}, \ \xi(n)=(1_{X_{n+1}=1}, \ldots, 1_{X_{n+1}=d}, 1_{Y_{n+1}=1}, \ldots, 1_{Y_{n+1}=d})
\end{equation}
and $H: \R^{2d} \times \R^{2d} \rightarrow \R^{2d}$ defined by
\[
H(z(n),\xi(n))=-z(n)+\xi(n).
\]
That is,
\begin{equation} \label{our stochastic approximation}
z(n+1)-z(n)=\frac{1}{n+1+d}\left( -z(n)+\xi(n) \right).
\end{equation}

To analyze the asymptotic behavior of $z(n)$ in (\ref{our stochastic approximation}), it is convenient to introduce a related ODE as shown in the next section.
%%%%%%%%%%%%%%%%%%%%%%%%%%%%%%%%%%%%%%%%%%%%%%%%%%%%%%%%%%%%%%%%%%%%%%%%%%%%%%%%%%%%%%%%%%
\section{The dynamical approach}  \label{the dynamical approach}
%%%%%%%%%%%%%%%%%%%%%%%%%%%%%%%%%%%%%%%%%%%%%%%%%%%%%%%%%%%%%%%%%%%%%%%%%%%%%%%%%%%%%%%%%%

The dynamical approach is a method used to analyze stochastic approximations, introduced by Ljung \cite{ljung1977analysis}
and Kushner and Clark \cite{kushner1978stochastic}. The idea is to decouple the stochastic approximation algorithm into its mean part and the other so-called ``noise" part, and then study the asymptotic behavior of the algorithm in terms of  the mean component's behavior.
 This method has been widely studied and inspired many works, such as the book by Kushner and Clark \cite{kushner1978stochastic}, numerous articles by Kushner, and more recently
the book by Benveniste, Metivier, and Priouret \cite{benveniste1990adaptive}.

In the above perspective, our stochastic approximation algorithm can be written as
\begin{align*}
z(n+1)-z(n)=\gamma_n\left\{(-z(n)+\EE{\xi(n)}{\mathcal F_n})+(\xi(n)-\EE{\xi(n)}{\mathcal F_n})\right\},
\end{align*}
where $\mathcal F_n$ was defined in Section \ref{definition_of_the_model}.
Set a map $\pi=(\pi_1, \ldots, \pi_d): \Delta\to \Delta$ with
\begin{equation} \label{stationary}
\pi_i(x)=\frac{\varpi(x_i)}{\sum_{k=1}^d \varpi(x_k)}, \quad \forall x\in \Delta,
\end{equation}
where $\varpi(x)=\left[\delta 1_{x\le \delta}+x 1_{x>\delta}\right]^{-\alpha}$.
Observe that, by (\ref{completemeasureXmove}), (\ref{completemeasureYmove}) and (\ref{definition xin}),
\begin{align*}
\EE{\xi(n)}{\mathcal F_n}=(\pi(y(n)), \pi(x(n))).
\end{align*}
Thus, defining $\{u_n\}_{n\ge 0}\subset\R^{2d}$ by
\begin{align}\label{definition u_n}
u_n=\xi(n)-\EE{\xi(n)}{\mathcal F_n}
\end{align}
and $F=(F_1,\ldots,F_{2d})$ to be a vector field in $\Delta\times \Delta$ with
\begin{equation}\label{definition F, part  1}
F_i(x_1,\ldots,x_d,y_1,\ldots,y_d)=
\begin{cases}
-x_i+\pi_i(y_1,\ldots,y_d), &\text{if $1\le i \le d$; }  \\
-y_{i-d}+\pi_{i-d}(x_1,\ldots,x_d), & \text{if  $d+1\le i \le 2d$, }
\end{cases}
\end{equation}
our random process takes the form
\begin{align}\label{definition system}
z(n+1)-z(n)=\gamma_n\left[F(z(n))+u_n\right].
\end{align}

The above expression is a particular case of a class of stochastic approximation algorithms studied by
Bena{\"\i}m  in \cite{benaim1996dynamical}, on which he related the behavior of the algorithm to a weak notion
of recurrence for the ODE: that of {\it chain-recurrence}. His theorem asserts that, under the assumptions
of Kushner and Clark lemma \cite{kushner1978stochastic}, the accumulation points of $\{z(n)\}_{n\ge 0}$
are contained in the chain-recurrent set of the semiflow generated by the ODE.

In the remaining of this section, we introduce the necessary definitions for semiflows,
then state Bena{\"\i}m's theorem, and conclude the section by proving that our model satisfies
the required conditions of this theorem.

\subsection{Preliminaries on semiflows}

Let $\Gamma\subset \R^m$ be a metric space and  ``$\rm dist(\cdot,\cdot)$" denote the metric. Let $\Phi:\R_{\ge 0}\times \Gamma\rightarrow \Gamma$ be a continuous map.
For simplicity, denote $\Phi(t,x)$ by $\Phi_t(x)$.

\begin{definition}[Semiflow]
A {\it semiflow} on $\Gamma$ is a continuous map $\Phi:\R_{\ge 0}\times \Gamma\rightarrow \Gamma$ such that
\begin{enumerate}[(i)]
\item $\Phi_0$ is the identity on $\Gamma$, and
\item $\Phi_{t+s}=\Phi_t\circ\Phi_s$ for any $t,s\ge 0$.
\end{enumerate}
\end{definition}

In particular, for every continuous vector field $F:\R^m\rightarrow\R^m$ with unique integral curves, we can associate a semiflow on $\R^m$
by the equation
\begin{align*}
\frac{d}{dt}\Phi_t(x)=F(\Phi_t(x))\, ,\ \ \ \forall\,x\in\R^m,\forall\,t\in\R_{\ge 0}.
\end{align*}
If $F$ is Lipschitz, it has unique integral curves.

Fix a semiflow $\Phi$ on $\Gamma\subset\R^m$.

\begin{definition}[Invariant set]
A set $A \subset \Gamma$ is called {\it invariant} if $\Phi_t(A)\subset A$ for every $t\ge 0$.
\end{definition}

\begin{definition}[Equilibrium point]
A point $x\in \Gamma$ is called an {\it equilibrium} if $\Phi_t(x)=x$ for all $t\ge 0$. The {\it equilibrium set} of
$\Phi$ is the set of all equilibrium points.
\end{definition}

When $\Phi$ is induced by a vector field $F$, the equilibrium set coincides with the set
on which $F$ vanishes.

\begin{definition}[Chain-recurrent point]
Given $\rho,T>0$, a point $x\in \Gamma$ is called {\it $(\rho,T)$-recurrent} if there are points
$x_0=x,x_1,\ldots,x_{k-1},x_k=x\in \Gamma$ and real numbers $t_0,t_1,\ldots,t_{k-1}\ge T$ such that
\begin{align*}
\dist{\Phi_{t_i}(x_i)}{x_{i+1}}<\rho, \ \ \ i=0,\ldots,k-1.
\end{align*}
$x$ is said to be {\it chain-recurrent} if it is $(\rho,T)$-recurrent for any $\rho,T>0$.
\end{definition}

We denote by $\CR{\Phi}$ the set of chain-recurrent points. $\CR{\Phi}$ is closed and invariant.

We denote the limit set of a discrete sequence $\{x(n)\}_{n\ge 0}\subset \Gamma$ by $L\left(\{x(n)\}_{n\ge0}\right)$.
The sets describing the asymptotic behavior of the orbits of $\Phi$ are the omega limit sets.
\begin{definition}[Omega limit set]
The omega limit set of $w\in \Gamma$, denoted by $\omega(w)$, is the set of $x\in \Gamma$ such that $\lim_{k\to \infty}\Phi_{t_k} (w)= x$ for some sequence $t_k > 0$ with $\lim_{k\to \infty}t_k=\infty$.
\end{definition}
If $\Gamma$ is compact, $\omega(w)$ is a nonempty, compact, connected and invariant set.

\begin{definition}[Lyapunov function]  \label{definition Lyapunov function}
A continuous map
$L: \Gamma \to\R$ is said to be a {\it Lyapunov function} for some subset $\Lambda \subset \Gamma$ if the
function $t\in \R_{\ge 0} \to L(\Phi_t(x))$ is strictly decreasing along any non-constant orbit $\Phi_t(x) \subset \Lambda$.
\end{definition}

\subsection{A limit set theorem}

The reason we can characterize the limit set of the random process via the chain-recurrent set
of the deterministic semiflow is due to Theorem 1.2 of \cite{benaim1996dynamical} which, to
our purposes, is stated as

\begin{theorem}\label{theorem benaim}
Let $F:\R^m\to\R^m$ be a continuous vector field with unique integral curves,
and let $\{z(n)\}_{n\ge 0}$ be a solution to the recursion
\begin{align*}
z(n+1)-z(n)=\gamma_n\left[F(z(n))+u_n\right],
\end{align*}
where $\{\gamma_n\}_{n\ge 0}$ is a decreasing gain sequence\footnote{$\lim_{n\to\infty}\gamma_n=0$ and
$\sum_{n\ge 0}\gamma_n=\infty$.} and $\{u_n\}_{n\ge 0}\subset \R^m$. Assume that
\begin{enumerate}[(i)]
\item $\{z(n)\}_{n\ge 0}$ is bounded, and
\item for each $T>0$,
\begin{align*}
\lim_{n\to\infty} \sup_k \left\{\left\Vert\sum_{i=n}^{k-1}\gamma_i u_i\right\Vert: \sum_{i=n}^{k-1}\gamma_i \le T \right\}=0.
\end{align*}
\end{enumerate}
Then $L(\{z(n)\}_{n\ge 0})$ is a connected set chain-recurrent for the semiflow induced by $F$.
\end{theorem}

\subsection{The random process (\ref{definition system}) satisfies Theorem \ref{theorem benaim}}

First, note that $\delta 1_{x\le \delta}+x 1_{x>\delta}$ is bounded by $\delta$ and $1$, and $\varpi(x)$ is Lipschitz. Then $\pi$ in (\ref{stationary}) and $F$ in (\ref{definition F, part  1}) are Lipschitz. Meanwhile, $\gamma_n=1/(n+1+d)$ satisfies
\begin{align*}
\lim_{n\to\infty}\gamma_n=0\ \ \text{ and}\ \ \sum_{n\ge 0}\gamma_n=\infty.
\end{align*}
It remains to check condition (ii). For that, let $M_n=\sum_{i=0}^n\gamma_i u_i$. Observe that $\{M_n\}_{n\ge 0}$ is a martingale adapted to
 $\{\mathcal F_{n+1}\}_{n\ge 0}$
\begin{align*}
\EE{M_{n+1}}{\mathcal F_{n+1}}=\sum_{i=0}^n\gamma_i u_i+\EE{\gamma_{n+1}u_{n+1}}{\mathcal F_{n+1}}=\sum_{i=0}^n\gamma_i u_i=M_n.
\end{align*}
Furthermore, because for any $n\ge 0$
\begin{align*}
\sum_{i=0}^n\EE{\Vert M_{i+1}-M_i\Vert^2}{\mathcal F_{i+1}}\le (2d)^2 \cdot\sum_{i=0}^n\gamma_{i+1}^2\le (2d)^2 \cdot \sum_{i\ge 0}\gamma_i^2<\infty\ \text{ a.s.},
\end{align*}
the sequence $\{M_n\}_{n\ge 0}$ converges  to a finite random variable in $\R^{2d}$ almost surely
(see e.g. Theorem 5.4.9 of~\cite{durrett2010probability}). In particular, it is a Cauchy sequence and so condition (ii) holds almost surely.

Now, in view of Theorem \ref{theorem benaim}, we will investigate the chain-recurrent set of semiflow
generated by the ODE
\begin{equation}  \label{DerODE}
\begin{cases}\dfrac{du_i(t)}{dt}=-u_i(t)+\displaystyle \frac{f(v_i)^{-\alpha}}{\sum_{k=1}^d f(v_k)^{-\alpha}}, & i=1,\ldots, d \\
\dfrac{dv_i(t)}{dt}=-v_i(t)+\displaystyle \frac{f(u_i)^{-\alpha}}{\sum_{k=1}^d f(u_k)^{-\alpha}}, & i=1,\ldots, d \\
\end{cases}
\end{equation}
where
\begin{equation}  \label{truncation_function_f}
f(x)=\delta 1_{x\le \delta}+x 1_{x>\delta}, \, i.e. \, f(x)=\varpi(x)^{-\frac{1}{\alpha}}.
\end{equation}
We can rewrite (\ref{DerODE}) in vector form
\begin{equation}  \label{DerODEvector}
\begin{cases}\dfrac{du(t)}{dt}=-u(t)+\displaystyle \pi(v(t))  \\
\dfrac{dv(t)}{dt}=-v(t)+\displaystyle \pi(u(t))  \\
\end{cases}
\quad \text{or} \quad \frac{d\Xi(t)}{dt}=F(\Xi(t))
\end{equation}
where $\Xi(t)=(u(t),v(t))\in D$.

Before moving to the proof of Theorem \ref{main theorem}, we will prove a simple fact regarding (\ref{DerODE}).

\begin{proposition} \label{proposition10}
The domain $D$  is invariant under $\Phi$, the semiflow induced by (\ref{DerODE}).
\end{proposition}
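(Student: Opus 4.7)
The plan is to verify the two defining properties of $D = \Delta \times \Delta$ separately: that each component's coordinates sum to one along trajectories, and that no coordinate can become negative. Since $F$ is globally Lipschitz (as already noted in the text) and the map $\pi : \Delta \to \Delta$ takes values in the probability simplex (so in particular $\sum_{i} \pi_i(\cdot) = 1$ and each $\pi_i(\cdot) > 0$), both checks should be straightforward direct computations on the ODE.

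For the sum constraint, I would compute
\[
\frac{d}{dt}\sum_{i\in S} u_i(t) \;=\; -\sum_{i\in S} u_i(t) + \sum_{i\in S}\pi_i(v(t)) \;=\; 1 - \sum_{i\in S} u_i(t),
\]
which yields $\sum_i u_i(t) = 1 + \bigl(\sum_i u_i(0) - 1\bigr)e^{-t}$. Since the initial condition lies in $D$, we have $\sum_i u_i(0)=1$, so $\sum_i u_i(t)\equiv 1$ for all $t\ge 0$, and by the identical argument the same holds for $v(t)$.

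For non-negativity, I would use the variation-of-constants formula applied coordinatewise. Rewriting $du_i/dt + u_i = \pi_i(v(t))$ gives
\[
u_i(t) \;=\; u_i(0)\,e^{-t} + \int_0^t e^{-(t-s)}\,\pi_i(v(s))\,ds,
\]
and the analogous expression for $v_i(t)$. Because $\pi_i(\cdot) \ge 0$ everywhere (indeed strictly positive, given the truncation in $f$), the integrand is non-negative, and $u_i(0)\ge 0$ from $(u(0),v(0))\in D$ forces $u_i(t)\ge 0$ for all $t\ge 0$. Combined with the previous paragraph's conservation of the simplex sum, this shows $(u(t),v(t))\in D$ for all $t\ge 0$, giving positive invariance.

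I do not anticipate a real obstacle here, since the argument is standard for replicator-type dynamics on the simplex; the only subtle point is ensuring that the variation-of-constants representation is legitimate before we know $u_i\ge 0$, and that is guaranteed simply by the unique-existence-of-solutions theorem invoked earlier in the text, which furnishes a continuous curve $v(s)$ into which we can substitute. The mild technicality that $F$ is only Lipschitz (not smooth) near $\partial D$ due to the kink in $f$ does not affect any of the steps above.
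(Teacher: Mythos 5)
Your proof is correct, but it takes a genuinely different route from the paper's. The paper argues via the boundary tangency condition: it observes that on any face $\{u_i = 0\}$ of $\partial D$ the vector field satisfies
\[
\left.\frac{du_i}{dt}\right|_{(u,v)} = \pi_i(v) \ge \inf_{v\in\Delta}\frac{f(v_i)^{-\alpha}}{\sum_j f(v_j)^{-\alpha}} > 0,
\]
so $F$ points strictly inward on $\partial D$, and any forward trajectory from $D$ stays in $D$ (the affine constraint $\sum_i u_i = 1$ is preserved automatically since $F$ takes values in $TD$). You instead integrate the ODE directly: the Duhamel representation $u_i(t) = u_i(0)e^{-t} + \int_0^t e^{-(t-s)}\pi_i(v(s))\,ds$ gives non-negativity from $\pi_i\ge 0$ alone, and the scalar linear ODE for $\sum_i u_i$ gives conservation of the simplex sum. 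Both hinge on $\pi$ being a map into $\Delta$; the paper additionally uses the uniform positive lower bound on $\pi_i$ (coming from the truncation in $f$), whereas your variation-of-constants argument only needs $\pi_i\ge 0$, so it is slightly more robust and also more constructive. The paper's version is shorter; yours makes the mechanism explicit and sidesteps any delicate reasoning about inward-pointing fields on a non-smooth (corner) boundary. Your remark about the legitimacy of substituting the solution $v(s)$ before knowing invariance is well taken, and is indeed covered by the global Lipschitz continuity of $F$ already established in the text.
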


\begin{proof}
Suppose $(u,v)\in \partial D$. Without loss of generality, we can assume that there exists some $i\in V$ such that $u_i=0$. Then by
(\ref{DerODE}), we have
\[\left.\frac{du_i(t)}{dt}\right|_{(u,v)}\ge \inf_{v\in \Delta }\frac{ f(v_i)^{-\alpha}}{\sum_{j=1}^d f(v_j)^{-\alpha}}> 0.
\]
Hence, $F(u,v)$ points inward whenever $(u,v)$ belongs to the boundary of $D$. Thus any forward trajectory based in $D$ remains in $D$.
\end{proof}

%%%%%%%%%%%%%%%%%%%%%%%%%%%%%%%%%%%%%%%%%%%%%%%%%%%%%%%%%%%%%%%%%%%%%%%%%%%%%%%%%%%%%%%%%%
\section{Proof of Theorem \ref{main theorem} }  \label{proof of main theorem}
%%%%%%%%%%%%%%%%%%%%%%%%%%%%%%%%%%%%%%%%%%%%%%%%%%%%%%%%%%%%%%%%%%%%%%%%%%%%%%%%%%%%%%%%%%
According to Theorem \ref{theorem benaim}, the limit set of  $\{z(n)\}_{n\ge 0}$ is contained in the chain recurrent set,
and so the first step to prove Theorem \ref{main theorem} is to characterize  chain-recurrent set for our specific semiflow induced by (\ref{DerODE}). Recall $U$'s definition in Notation \ref{notation_collection}. We will conclude the proof of Theorem \ref{main theorem} by showing that $\{z(n)\}_{n\ge 0}$ has probability 0 to
converge to the isolated unstable equilibrium $(U,U)$.

%%%%%%%%%%%%%%%%%%%%%%%%%%%%%%%%%%%%%%%%%%%%%%%%%%%%%%%%%%%%%%%%%%%%%%%%%%%%%%%%%%%%%%%%%%
\subsection{Chain recurrent set}   \label{chain recurrent set}
%%%%%%%%%%%%%%%%%%%%%%%%%%%%%%%%%%%%%%%%%%%%%%%%%%%%%%%%%%%%%%%%%%%%%%%%%%%%%%%%%%%%%%%%%%

%%%%%%%%%%%%%%%%%%%%%%%%%%%%%%%%%%%%%%%%%%%%%%%%%%%%%%%%%%%%%%%%%%%%%%%%%%%%%%%%%%%%%%%%%%
\subsubsection{Lyapunov function}   \label{Lyapunov function}
%%%%%%%%%%%%%%%%%%%%%%%%%%%%%%%%%%%%%%%%%%%%%%%%%%%%%%%%%%%%%%%%%%%%%%%%%%%%%%%%%%%%%%%%%%

We characterize our chain-recurrent set $\CR{\Phi}$ by introducing a Lyapunov function
\begin{equation} \label{Lyapunov function L(u,v)}
L(u,v)=\sum_{i=1}^d u_i v_i, \quad (u,v)\in D.
\end{equation}
Let $\Phi_t(x)=(u_1(t),\ldots, u_d(t),v_1(t), \ldots, v_d(t))$ ($t\ge 0$) be an orbit of $\Phi$ where $x=(u_1(0),\ldots,\\ u_d(0),v_1(0), \ldots, v_d(0))$. Then
\begin{eqnarray}   \label{derivativeformula}
&& \frac{d}{dt}(L(\Phi_t(x)))  \nonumber \\
&=&\sum_{i=1}^d  v_i(t)\frac{du_i(t)}{dt}+ \sum_{i=1}^d u_i(t) \frac{dv_i(t)}{dt}     \nonumber     \\
&=&\sum_{i=1}^d  v_i\left(-u_i+\frac{f(v_i)^{-\alpha}}{\sum_{k=1}^d f(v_k)^{-\alpha}}\right)  + \sum_{i=1}^d u_i \left(-v_i+\frac{f(u_i)^{-\alpha}}{\sum_{k=1}^d f(u_k)^{-\alpha}}\right)          \nonumber\\
             &=&-2\sum_{i=1}^d u_i v_i+\frac{\sum_{i=1}^d u_if(u_i)^{-\alpha}}{\sum_{k=1}^d f(u_k)^{-\alpha}}+\frac{\sum_{i=1}^d v_if(v_i)^{-\alpha}}{\sum_{k=1}^d f(v_k)^{-\alpha}}.
\end{eqnarray}
Notice that the right hand side of (\ref{derivativeformula}) depends on $t$ only through dependence on $u_i(t)$ and $v_i(t)$. We have the following lemma about (\ref{derivativeformula}), which verifies that $L(u,v)$ is a Lyapunov function for a large subset of the domain $D$ according to Definition \ref{definition Lyapunov function}.

\begin{lemma} \label{negative derivative}
Let $D^\delta=\left\{(u,v)\in D:  L(u,v)\ge 3\delta \right\}$. For any fixed $d\ge 3\in \mathbb{N}$, there exists some $\alpha(d)$
independent of $\delta$, s.t. when $\alpha\ge \alpha(d)$
\begin{equation}  \label{derivative}
\left.\frac{d}{dt}(L(\Phi_t(x))) \right|_{(u,v)}\le 0, \quad \forall (u,v)\in D^\delta,
\end{equation}
with equality if and only if $(u,v)=(U,U)$.
\end{lemma}

To prove this lemma, we need several other lemmas. Recall that $V=\{1, \ldots, d\}$.

\begin{lemma}  \label{lemma4}
When $\alpha>d-2$, $U$ (uniform distribution) is a local minimum in $\Delta$ of the  following function
\[g(u_1,\ldots,u_d):=2\min_{i\in V} u_i -d\left(\min_{i\in V} u_i\right)^2 -  \frac{\sum_{i=1}^d u_i^{-\alpha}}{\sum_{i=1}^d
u_i^{-(\alpha+1)}}.
\]
In particular, $g\left(\frac{1}{d},\ldots,\frac{1}{d}\right)=0$.
\end{lemma}

\begin{proof}
Define a function on $(\mathbb{R}^d)^+$:
\[G(w_1,\ldots,w_d)=\frac{2}{\sum_{i=1}^d w_i}\cdot \min_{i\in V} w_i -\frac{d}{(\sum_{i=1}^d w_i)^2}\cdot (\min_{i\in V} w_i)^2 - \frac{1}{\sum_{i=1}^d w_i}\cdot \frac{\sum_{i=1}^d w_i^{-\alpha}}{\sum_{i=1}^d w_i^{-(\alpha+1)}}.
\]
Observe that $G(w_1,\ldots,w_d)$ is a homogeneous function, and that it has the same value as $g(u_1,\ldots,u_d)$ whenever
\[u_i=\frac{w_i}{\sum_{j=1}^d w_j}, \, \forall i \in V.
\]
Let  $W=(w, \ldots, w)$ ($w>0$)  (we refer to $W$ as the diagonal). So to prove the lemma, it suffices to prove that
$W$ is a local minimum of $G(w_1,\ldots,w_d)$.

Without loss of generality, we can assume $w_d=\min_{i\in V} w_i$, then
\begin{equation}
G(w_1,\ldots,w_d)=\frac{2}{\sum_{i=1}^d w_i}\cdot w_d -\frac{d}{(\sum_{i=1}^d w_i)^2}\cdot w_d^2 - \frac{1}{\sum_{i=1}^d w_i}\cdot \frac{\sum_{i=1}^d w_i^{-\alpha}}{\sum_{i=1}^d w_i^{-(\alpha+1)}}.
\end{equation}
By direct calculation, we can check that $G(w_1,\ldots,w_d)$ has zero gradient at $W$, i.e. $\left. \nabla G\right|_{W}=0$.
Then we can calculate the Hessian matrix of $G(w_1,\ldots,w_d)$  at $W$
\[
\left. H\right|_{W} = \frac{2}{d^2w^2}
             \cdot \begin{pmatrix}
-\frac{\alpha+2}{d}+\alpha+1      & \ldots           &-\frac{\alpha+2}{d} & 1-\frac{\alpha+2}{d} \\
\vdots  &       \ddots & \vdots  &\vdots   \\
-\frac{\alpha+2}{d} & \ldots &  -\frac{\alpha+2}{d}+\alpha+1  & 1-\frac{\alpha+2}{d} \\
1-\frac{\alpha+2}{d}   & \ldots & 1-\frac{\alpha+2}{d} & -\frac{\alpha+2}{d}+\alpha+1+2-d
\end{pmatrix}.
\]
Set
\[P=\begin{pmatrix}
-\frac{\alpha+2}{d}+\alpha+1      & \ldots           &-\frac{\alpha+2}{d} & 1-\frac{\alpha+2}{d} \\
\vdots  &       \ddots & \vdots  &\vdots   \\
-\frac{\alpha+2}{d} & \ldots &  -\frac{\alpha+2}{d}+\alpha+1  & 1-\frac{\alpha+2}{d} \\
1-\frac{\alpha+2}{d}   & \ldots & 1-\frac{\alpha+2}{d} & -\frac{\alpha+2}{d}+\alpha+1+2-d
\end{pmatrix},
\]
and
\[Q=\begin{pmatrix}
-\frac{\alpha+2}{d}   & -\frac{\alpha+2}{d}    & \ldots           &-\frac{\alpha+2}{d} & 1-\frac{\alpha+2}{d} \\
-\frac{\alpha+2}{d}    &  -\frac{\alpha+2}{d}  &-\frac{\alpha+2}{d} &\ldots & 1-\frac{\alpha+2}{d} \\
\vdots  &       \vdots &  \vdots & \vdots  &\vdots   \\
-\frac{\alpha+2}{d} & \ldots &-\frac{\alpha+2}{d} &  -\frac{\alpha+2}{d}  & 1-\frac{\alpha+2}{d} \\
1-\frac{\alpha+2}{d}  & \ldots & \ldots & 1-\frac{\alpha+2}{d} & -\frac{\alpha+2}{d}+2-d
\end{pmatrix}.
\]
Notice that $\left. H\right|_{W}$, $P$ and $Q$ satisfy
\[\left.H\right|_{W} = \frac{2}{d^2w^2}P,\quad P=(\alpha+1) I+Q,
\]
where $I$ is the identity matrix.
By direct calculation, we can get all the eigenvalues of matrix $Q$
\[\lambda^Q_1=\ldots=\lambda^Q_{d-2}=0,\lambda^Q_{d-1}=-(\alpha+1),\lambda^Q_d=-\left(d-1\right),
\]
and then get all the eigenvalues of  $P$ by shifting  $Q$'s eigenvalues by $\alpha+1$
\[\lambda^P_1=\ldots=\lambda^P_{d-2}=\alpha+1,\lambda^P_{d-1}=0,\lambda^P_d=\alpha+2-d.
\]
It is easy to see that when  $\alpha>d-2$,  one of $P$'s eigenvalues is zero and all the others are strictly positive.
It is also easy to check that the sum of each row of $P$ is zero, which means
\[P\begin{pmatrix}
1, \ldots, 1
\end{pmatrix}^T=0\cdot\begin{pmatrix}
1, \ldots, 1
\end{pmatrix}^T.
\]
That is, the diagonal is an eigenvector associated with $P$'s  zero eigenvalue and then $H$'s  zero eigenvalue.
Then we can conclude that $G(w_1,\ldots,w_d)$ is convex along all the other directions except the diagonal,
and hence the diagonal is its local minimum.
\end{proof}

Keeping the notations of Lemma \ref{lemma4}, we have the following lemma.
\begin{lemma} \label{lemma2}
For any fixed positive integer $d\ge 3$, there exists some $\alpha_0(d)$, such that when $\alpha>\alpha_0(d)$,
$U$ (uniform distribution) is a global minimum  of $g(u_1,\ldots,u_d)$ in $\Delta$.
\end{lemma}

\begin{proof}
It is equivalent to show that for any $u=(u_1,\ldots,u_d)\in \overset{\circ}{\Delta}$ the following holds
\begin{equation}  \label{inequality2}
2\min_{i\in V} u_i -d\left(\min_{i\in V} u_i\right)^2 \ge  \frac{\sum_{i=1}^d u_i^{-\alpha}}{\sum_{i=1}^d u_i^{-(\alpha+1)}},
\end{equation}
with equality if and only if $u$ is the uniform distribution, i.e. $u=U$.

We will divide the proof of (\ref{inequality2}) into two cases:
\begin{enumerate}[(1)]
\item $u$ is in a neighborhood of uniform distribution;
\item $u$ is bounded away from the uniform distribution. Equivalently, there exists some $0<\kappa<1$ s.t. $\min_{i\in V} u_i< \kappa/d$.
\end{enumerate}

Case (1) directly follows from Lemma \ref{lemma4}. \\

To prove case (2), first we try to use the minimum coordinates of $u$ to bound the right hand side of (\ref{inequality2}) from above.
More precisely, for fixed $d$ and $\alpha$, we will show that for any $u\in \overset{\circ}{\Delta}$ the following holds
\begin{equation}  \label{inequality3}
\frac{\sum_{i=1}^d u_i^{-\alpha}}{\sum_{i=1}^d u_i^{-(\alpha+1)}} \le d^{1/(\alpha+1)}\min_{i\in V} u_i.
\end{equation}

Without loss of generality, we can assume $u_d=\min_{i\in V} u_i$, and then if we set $a_i=\min_{i\in V} u_i/u_i=u_d/u_i\in (0,1]$,
(\ref{inequality3})  is equivalent to  the following inequality with $a_i\in (0,1]\, (i=1,\ldots,d-1)$
\begin{equation} \label{function1}
\frac{1+\sum_{i=1}^{d-1} a_i^{\alpha}}{1+\sum_{i=1}^{d-1} a_i^{1+\alpha}} \le d^{1/(\alpha+1)}.
\end{equation}
To prove (\ref{function1}), observe that by H{\"o}lder's inequality,
\[\left(\frac{1+\sum_{i=1}^{d-1} a_i^{\alpha}}{d}\right)^{1/\alpha}\le\left(\frac{1+\sum_{i=1}^{d-1} a_i^{\alpha+1}}{d}\right)^{1/(\alpha+1)},
\]
i.e.
\[1+\sum_{i=1}^{d-1} a_i^{\alpha}\le d^{1/(\alpha+1)}\left(1+\sum_{i=1}^{d-1} a_i^{\alpha+1}\right)^{\alpha/(\alpha+1)}.
\]
Then
\begin{eqnarray}
\frac{1+\sum_{i=1}^{d-1} a_i^{\alpha}}{1+\sum_{i=1}^{d-1} a_i^{1+\alpha}} &\le& \frac{d^{1/(\alpha+1)}\left(1+\sum_{i=1}^{d-1} a_i^{\alpha+1}\right)^{\alpha/(\alpha+1)}}{1+\sum_{i=1}^{d-1} a_i^{1+\alpha}}  \nonumber \\
                                                              &=& \frac{d^{1/(\alpha+1)}}{\left(1+\sum_{i=1}^{d-1} a_i^{1+\alpha}\right)^{1/(\alpha+1)}}< d^{1/(\alpha+1)} \nonumber,
\end{eqnarray}
proving (\ref{inequality3}). Notice that when $\alpha\ge \log d/\log(2-\kappa)-1$, for any $0< u_d< \kappa/d$ the following inequality holds
\begin{equation} \label{inequality4}
d^{1/(\alpha+1)}\cdot u_d< 2u_d-du_d^2, \quad i.e. \quad d^{1/(\alpha+1)} < 2-du_d.
\end{equation}
Then (\ref{inequality3}) and (\ref{inequality4}) together imply that when $\alpha\ge \log d/\log(2-\kappa)-1$, for any $u$ satisfying $\min_{i\in V} u_i< \kappa/d$
\begin{equation}  \label{faruniform}
2\min_{i\in V} u_i -d\left(\min_{i\in V} u_i\right)^2 >  \frac{\sum_{i=1}^d u_i^{-\alpha}}{\sum_{i=1}^d u_i^{-(\alpha+1)}}.
\end{equation}

Finally, we need to glue the results in the above two cases together. From case (1), for fixed  $d$ and $\alpha>d-2$, there exists a neighborhood
of the uniform distribution $\mathcal{N}(U, \eps_\alpha)$ s.t. for any $u\in \mathcal{N}(U, \eps_\alpha)$, (\ref{inequality2}) holds. For any $u\neq U$
in $\overset{\circ}{\Delta}$, since $\sum_{i=1}^d u_i^{-\alpha}/\sum_{i=1}^d u_i^{-(\alpha+1)}$ is a decreasing
function\footnote{This can be proved by directly checking the derivative.}  in $\alpha$, $g(u_1,\ldots,u_d)$ in Lemma \ref{lemma4} is
an increasing function in $\alpha$. This allows us to take some common neighborhood $\mathcal{N}(U, \eps_d)=\bigcap_{\alpha>d-1}\mathcal{N}(U, \eps_\alpha)$ just depending on $d$ such that (\ref{inequality2}) holds. Take some $\kappa=\kappa(d)<1$ such that
\[\left\{u\in \overset{\circ}{\Delta}:\min_{i\in V} u_i<\frac{\kappa}{d}\right\}\cup \mathcal{N}(U, \eps_d) =\overset{\circ}{\Delta}.
\]
Then set
\[\alpha_0(d)=\max\left\{d-1,\ \frac{\log d}{\log(2-\kappa)}-1 \right\}.
\]
When $\alpha>\alpha_0(d)$, the above two cases imply (\ref{inequality2}).
\end{proof}

\begin{lemma} \label{proposition1}
For any fixed positive integer $d\ge 3$, there exists some $\alpha_0(d)$, such that when $\alpha>\alpha_0(d)$, for any $(u,v)=(u_1,\ldots,u_d,v_1,\ldots,v_d)\in \overset{\circ}{D}$ the following holds
\begin{equation} \label{inequality1}
2\sum_{i=1}^d u_iv_i\ge \frac{\sum_{i=1}^d u_i^{-\alpha}}{\sum_{i=1}^d u_i^{-(\alpha+1)}}+\frac{\sum_{i=1}^d v_i^{-\alpha}}{\sum_{i=1}^d v_i^{-(\alpha+1)}},
\end{equation}
with equality if and only if $u_i=v_i=\frac{1}{d}$ for any $i$, i.e. $(u,v)=(U,U)$ .
\end{lemma}

\begin{proof}
First for any fixed $u,v\in \overset{\circ}{\Delta}$, we will bound the left hand side of (\ref{inequality1}) from below by the minimum coordinates of $u$ and $v$. More precisely, if we construct two $d-$dimensional vectors $u^\prime$ and $v^\prime$ by the minimum coordinates of $u$ and $v$
\[u^\prime=\left(\min_{i\in V}u_i, \ldots,\min_{i\in V}u_i ,1-(d-1)\min_{i\in V}u_i \right)
\]
and
\[v^\prime=\left(1-(d-1)\min_{i\in V}v_i, \min_{i\in V}v_i, \ldots,\min_{i\in V}v_i\right),
\]
we will show
\begin{equation}  \label{minimumbound}
\sum_{i=1}^d u_iv_i \ge \sum_{i=1}^d u^\prime_iv^\prime_i=\min_{i\in V}u_i +\min_{i\in V} v_i-d\cdot \min_{i\in V}u_i \min_{i\in V} v_i.
\end{equation}
By the Rearrangement inequality, it suffices to show (\ref{minimumbound}) for any $(u,v)\in \overset{\circ}{D} $ satisfying $u_1\le\ldots\le u_d$ and $v_1\ge \ldots \ge v_d$. For such $u$ and $v$, we have
\begin{eqnarray}
\sum_{i=1}^d u_i v_i &=& u_1 v_1+ \sum_{i=2}^d u_i v_d +\sum_{i=2}^d u_i (v_i-v_d) \nonumber \\
                     &\ge& u_1 v_1+ \sum_{i=2}^d u_i v_d +\sum_{i=2}^d u_1 (v_i-v_d) \nonumber \\
                     &=& u_1 (1-(d-1)v_d)+ \sum_{i=2}^d u_i v_d  \nonumber \\
                     &=& \sum_{i=1}^d u_iv^\prime_i
                     \ge  \sum_{i=1}^d u^\prime_iv^\prime_i \nonumber,
\end{eqnarray}
where the last step is obtained by repeating the same argument as in the previous steps. Then we have proved (\ref{minimumbound}).

By Lemma \ref{lemma2}, we know that there exists some $\alpha_0(d)$, such that when $\alpha>\alpha_0(d)$, for any $u,v\in \overset{\circ}{\Delta}$ the following inequalities hold
\begin{equation}
2\min_{i\in V}u_i -d\left(\min_{i\in V}u_i\right)^2 \ge  \frac{\sum_{i=1}^d u_i^{-\alpha}}{\sum_{i=1}^d u_i^{-(\alpha+1)}}  \nonumber
\end{equation}
and
\begin{equation}
2\min_{i\in V}v_i -d\left(\min_{i\in V}v_i\right)^2 \ge  \frac{\sum_{i=1}^d v_i^{-\alpha}}{\sum_{i=1}^d v_i^{-(\alpha+1)}}.  \nonumber
\end{equation}
Then by the following
\begin{eqnarray}
& & 2\left(\min_{i\in V}u_i +\min_{i\in V}v_i-d\min_{i\in V}u_i \min_{i\in V}v_i\right) \nonumber \\
&\ge& \left(2\min_{i\in V}u_i -d\left(\min_{i\in V}u_i\right)^2 \right)   +\left(2\min_{i\in V}v_i -d\left(\min_{i\in V}v_i\right)^2\right), \nonumber \end{eqnarray}
we have proved (\ref{inequality1}).

It is also easy to check the equality holds if and only if both $u$ and $v$ are uniform distributions, i.e. $u=v=U$.
\end{proof}

\begin{proofof}{Lemma \ref{negative derivative}}
Recall that $f(x)=\delta 1_{x\le \delta}+x 1_{x>\delta}$. The proof of this lemma is divided into three cases:
\begin{enumerate}[(1)]
\item $\min_{i\in V} f(u_i)>\delta$ and $\min_{i\in V} f(v_i)>\delta$;
\item $\min_{i\in V} f(u_i)=\delta$ and $\min_{i\in V} f(v_i)>2 \delta$ (or the symmetric case);
\item $\min_{i\in V} f(u_i)=\delta$ and $\min_{i\in V} f(v_i)\le 2\delta$ (or the symmetric case).
\end{enumerate}

To prove case (1), observe that $\min_{i\in V} f(u_i)>\delta$ and $\min_{i\in V} f(v_i)>\delta$ imply $f(u_i)=u_i>\delta$ and $f(v_i)=v_i>\delta$
for any $i \in V$. Hence, to prove (\ref{derivative}), it is equivalent to prove (\ref{inequality1}). Then by Lemma \ref{proposition1}, when $\alpha >\alpha_0(d)+1$, $\left. \frac{d}{dt}(L(\Phi_t(x)))  \right|_{(u,v)}\le0$, with equality if and only if $(u,v)=(U,U)$. \\

Let us prove case (2).  From (\ref{derivativeformula}), it follows that
\begin{eqnarray} \label{DBM}
\left.\frac{d}{dt}(L(\Phi_t(x))) \right|_{(u,v)}  &=&-2\sum_{i=1}^d u_i v_i+\frac{\sum_{i=1}^d u_if(u_i)^{-\alpha}}{\sum_{k=1}^d f(u_k)^{-\alpha}}+\frac{\sum_{i=1}^d v_if(v_i)^{-\alpha}}{\sum_{k=1}^d f(v_k)^{-\alpha}} \nonumber \\
                    &\le&-2\sum_{i=1}^d u_i v_i+\frac{\sum_{i=1}^d f(u_i)f(u_i)^{-\alpha}}{\sum_{k=1}^d f(u_k)^{-\alpha}}+\frac{\sum_{i=1}^d f(v_i)f(v_i)^{-\alpha}}{\sum_{k=1}^d f(v_k)^{-\alpha}} \nonumber \\
                    &\le& -2\sum_{i=1}^d u_i v_i+d^{1/\alpha}\left(\min_{i\in V} f(u_i)+\min_{i\in V} f(v_i)\right),
\end{eqnarray}
where the last step is by (\ref{inequality3}), which actually holds for any collection of positive numbers.
Since
\[\sum_{i=1}^d u_i v_i\ge \max\left\{\min_{i\in V} u_i,\min_{j\in V} v_j\right\},
\]
it follows from the conditions in case (2) that
\[\sum_{i=1}^d u_i v_i\ge \max\left\{\min_{i\in V} u_i,\min_{j\in V} v_j\right\}\ge \min_{j\in V} v_j=\min_{i\in V} f(v_i).
\]
Hence,
\begin{eqnarray}
\left.\frac{d}{dt}(L(\Phi_t(x))) \right|_{(u,v)}  &\le& -2\sum_{i=1}^d u_i v_i+d^{1/\alpha}\left(\min_{i\in V} f(u_i)+\min_{i\in V} f(v_i)\right) \nonumber  \\
                    &\le& -2\min_{i\in V} f(v_i)+d^{1/\alpha}\left(\delta+\min_{i\in V} f(v_i)\right)  \nonumber \\
                    &=& -\left(2-d^{1/\alpha}\right)\min_{i\in V} f(v_i)+d^{1/\alpha}\delta \nonumber \\
                    &\le& -\left(2-d^{1/\alpha}\right)2\delta+d^{1/\alpha}\delta= -\left(4-3d^{1/\alpha}\right)\delta. \nonumber
\end{eqnarray}
So one can choose $\alpha>\log{d}/\log{\frac{4}{3}}$ such that  $4-3d^{1/\alpha}>0$, then obtaining $\left. \frac{d}{dt}(L(\Phi_t(x))) \right|_{(u,v)}<0$. \\

To prove case (3), one can choose $\alpha>\log_{2}{d}$ such that $\frac{3}{2}d^{1/\alpha}< 3$. Then by the definition of $D^\delta$, one has
\begin{equation}  \label{lyapunov larger than 3delta}
L(u,v)\ge 3\delta >\frac{3}{2}d^{1/\alpha} \delta, \quad \forall (u,v)\in D^\delta.
\end{equation}
(\ref{DBM})  and (\ref{lyapunov larger than 3delta}) imply
\begin{equation}
\left.\frac{d}{dt}(L(\Phi_t(x))) \right|_{(u,v)}  <-2\sum_{i=1}^d u_i v_i+3 d^{1/\alpha} \delta<0.
\end{equation}

To sum up, taking
\[\alpha(d)=\max\left\{\alpha_0(d)+1,\ \log{d}/\log{\frac{4}{3}}, \ \log_{2}{d}  \right\},
\]
we establish the lemma.
\end{proofof}

%%%%%%%%%%%%%%%%%%%%%%%%%%%%%%%%%%%%%%%%%%%%%%%%%%%%%%%%%%%%%%%%%%%%%%%%%%%%%%%%%%%%%%%%%%
\subsubsection{The main lemma}   \label{the main lemma}
%%%%%%%%%%%%%%%%%%%%%%%%%%%%%%%%%%%%%%%%%%%%%%%%%%%%%%%%%%%%%%%%%%%%%%%%%%%%%%%%%%%%%%%%%%

\textbf{} Now it comes to our main lemma to characterize the chain recurrent set for our specific semiflow $\Phi$.

\begin{lemma} \label{limitrecurrent}
Assume the result in Lemma \ref{negative derivative}. Let
\[S^\delta=\{(u,v)\in D: \sum_{i=1}^d u_iv_i\le 4\delta\}.
\] Then
$\CR{\Phi} \subset S^\delta \cup (U,U)$.
\end{lemma}

\begin{proof}
Set $\zeta_0=3.5\delta$, $\zeta_1=4\delta$ and $\zeta_2=1$. Define
\[M_j=\{(x,y)\in D: L(x,y)\le \zeta_j\}, \quad j=0,1,2.
\]
Note that  $M_1=S^\delta, M_2=D$. By Lemma \ref{negative derivative} and Proposition \ref{proposition10}, $M_j(j=0,1,2)$ are compact invariant sets.
Clearly, the lemma will follow once we prove:
\begin{enumerate}[(a)]
\item $CR_1=\CR{\Phi}\cap M_1$ and $CR_2=\CR{\Phi}\cap (M_2 \setminus M_1)$ are invariants sets;
\item $CR_2 =(U,U)$.
\end{enumerate}

Let's prove (a).

By the invariance of $\CR{\Phi}$ and $M_1$, it is clear that $CR_1$ is invariant.

We will show $CR_2$ is invariant by contradiction. Suppose for some $z\in CR_2$, there exists some $T_0>0$, s.t. $\Phi_{T_0}(z)\in M_1$. Then by Lemma \ref{negative derivative} and compactness of $\overline{M_1 \backslash M_0}$, there exists some $T_1>T_0$, such that
\begin{equation}  \label{smallball}
L(\Phi_{T_1}(z))<\zeta_0,\ i.e. \ \Phi_{T_1}(z) \in M_0.
\end{equation}
Also by Lemma \ref{negative derivative} and compactness of $\overline{M_1 \backslash M_0}$, there exists some $T_2>0$, s.t.
\[\Phi_{T_2}(M_1)\subset M_0.
\]
Let $\rho_0=\dist{M_0}{\overline{D\backslash M_1}}>0$ and $T=\max\{T_1, T_2\}$. By assumption, there are points $z_0=z,z_1,\ldots,z_{k-1},z_k=z\in D$ and real
numbers $t_0,\ldots,t_{k-1}>T$ such that
\begin{align}\label{condition chain-recurrence}
\dist{\Phi_{t_i}(z_i)}{z_{i+1}}<\rho_0, \ \ \ i=0,\ldots,k-1.
\end{align}
Thus $\Phi_{t_0}(z_0)=\Phi_{t_0}(z)\in M_0$ and so, by (\ref{condition chain-recurrence}), $z_1\in M_1$. By induction, we claim that $z_1,z_2,\ldots,z_k\in M_1$. Indeed, if $z_i\in M_1$, $\Phi_{t_i}(z_i)\in M_0$, and then by (\ref{condition chain-recurrence}), $z_{i+1}\in M_1$. In particular, $z_k=z\in M_1$, which contradicts. Hence, $\Phi_t(CR_2)\subset M_2\setminus M_1$ for all $t\ge0$. By invariance of $\CR{\Phi}$, $CR_2$ is invariant.

It remains to prove (b).

Since $(U,U)\in CR_2$, it suffices to show $CR_2\subset (U,U)$. For any $z\in CR_2$, by invariance of $CR_2$ and the non-increasing property of the Lyapunov function $L(\cdot)$ along any trajectory in $M_2\setminus M_1$, it follows that the limit of $L(\Phi_t(z))$ exists. Set
\[L(\Phi_{\infty}(z))=\lim_{t\to +\infty}L(\Phi_t(z)).
\]
Then for any $p\in \omega(z)$ (the omega limit set of $z$), $L(p)=L(\Phi_{\infty}(z))$. Together with invariance of $\omega(z)$, this implies $L(\cdot)$ is constant along trajectories in $\omega(z)$. Therefore, $\omega(z)\subset (U,U)$. Since $\omega(z)$ is nonempty, $\omega(z)=(U,U)$.

Now we will show $z=(U,U)$ by contradiction. Suppose $z\neq (U,U)$, then there exists a neighborhood of $z$, s.t. $L(\Phi_t(z))$ is strictly decreasing in this neighborhood. Since
\[L(\Phi_t(z))\ge L(\omega(z))=L(U,U)=1/d,
\]
there exists some $\epsilon>0$, s.t. $L(z)>1/d+\epsilon$.
Define
\[E=\left\{w \in D\left|L(w)\le \frac{1}{d}+\frac{\epsilon}{3}\right. \right\}, \quad F=\left\{w \in D\left|L(w)\ge \frac{1}{d}+\frac{\epsilon}{2}\right. \right\}.
\]
By by Lemma \ref{negative derivative} and compactness of $F$, there exists some $T$, such that
$\Phi_T(F)\subset E$.
Let $\rho_1=\dist{E}{F}>0$. Then by a similar argument as that in (a), one can get a contradiction.
\end{proof}

%%%%%%%%%%%%%%%%%%%%%%%%%%%%%%%%%%%%%%%%%%%%%%%%%%%%%%%%%%%%%%%%%%%%%%%%%%%%%%%%%%%%%%%%%%
\subsection{Non-convergence to unstable equilibrium}  \label{Non-convergence to unstable equilibrium}
%%%%%%%%%%%%%%%%%%%%%%%%%%%%%%%%%%%%%%%%%%%%%%%%%%%%%%%%%%%%%%%%%%%%%%%%%%%%%%%%%%%%%%%%%%

By Theorem \ref{theorem benaim} and Lemma \ref{limitrecurrent},
\[\mathbb{P}\left( L(\left \{ z(n) \right\}_{n\ge 0})\subset S^\delta \cup (U,U)  \right)=1.
\]
Since $S^\delta$ and $(U,U)$ are disconnected, we can finish the proof of Theorem \ref{main theorem}
by proving the following lemma.

\begin{lemma} \label{nonuniform}
When $\alpha>1$, for any initial condition $(x(0),y(0))\in D$, $z(n)$ in (\ref{main random process}) satisfies
\[ \mathbb{P}\left(\lim_{n\to \infty}z(n)=(U,U)\right)=0.
\]
\end{lemma}

Lemma \ref{nonuniform} is an application of the following theorem to our case. This theorem is due to Pemantle ~\cite{pemantle1990nonconvergence} which, to our purpose, is stated as

\begin{theorem}{~\cite[Theorem 1]{pemantle1990nonconvergence}}  \label{unstableanalysis}
Define a stochastic process
\[z(n+1)-z(n)=\frac{1}{n+1+d} [F(z(n))+u_n]
\]
with $\mathbb{E}(u_n|\mathcal{F}_n)=0$ and such that $z(n)$ always remains in bounded domain $D$. Let $p$ be any point in $\overset{\circ}{D}$ with $F(p)=0$, let $\mathcal{N}$ be a neighborhood of $p$ and assume that there are constants  $c_1,c_2>0$ for which the following conditions  are satisfied whenever $z(n)\in \mathcal{N}$ and $n$ is sufficiently large:
\begin{enumerate}[(1)]
\item $p$ is a linearly unstable critical point,
\item $\mathbb{E}((u_n\cdot\theta)^+|\mathcal{F}_n)\ge c_1$ for every unit vector $\theta\in TD$ (see the definition of $TD$ in Notation \ref{notation_collection}),
\item $\Vert u_n \Vert \le c_2$,
\end{enumerate}
where $(u_n\cdot\theta)^+=\max\{u_n\cdot\theta,0\}$ is the positive part of $u_n\cdot\theta$. Assume $F$ is smooth enough to apply the stable manifold theorem: at least $C^2$. Then
\[\mathbb{P}\left(\lim_{n\to \infty}z(n)= p\right)=0.
\]
\end{theorem}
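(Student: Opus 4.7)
The plan is to argue by contradiction: suppose that the event
\[
A := \{z(n)\to p\}\cap\{z(n)\in\mathcal{N}\text{ for all large }n\}
\]
has positive probability, and show that, with full probability on $A$, the component of $z(n)-p$ along a linearly unstable direction fails to converge to $0$. Since $F\in C^2$ and $p$ is linearly unstable, the stable manifold theorem gives a $DF(p)$-invariant splitting of $TD$ at $p$ into a stable subspace and a nontrivial unstable subspace; pick a unit left-eigenvector $\theta$ of $DF(p)$ (taking real/imaginary parts of a generalized eigenvector if necessary) whose eigenvalue has real part $\lambda>0$. Define the scalar process $Y_n := \theta\cdot(z(n)-p)$; the goal reduces to showing $\mathbb{P}(A\cap\{Y_n\to 0\})=0$.

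First I would derive a one-step recursion for $Y_n$. Projecting the stochastic approximation equation onto $\theta$ and Taylor-expanding $F$ about $p$ gives
\[
Y_{n+1} = (1+\sigma_n\lambda)\,Y_n + \theta\cdot\eps(n) + R_n,
\]
with $|R_n|\le C\sigma_n\|z(n)-p\|^2$ on $\mathcal{N}$. Set $\Pi_n:=\prod_{k=n_0}^{n}(1+\sigma_k\lambda)$ and $M_n:=Y_n/\Pi_{n-1}$. By condition (2), $\log\Pi_n\asymp \lambda\sum\sigma_k$ diverges (polynomially when $\gamma=1$, stretched-exponentially when $\gamma<1$), so $\Pi_n\to\infty$. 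The recursion becomes
\[
M_{n+1} = M_n + \frac{\theta\cdot\eps(n)}{\Pi_n} + \frac{R_n}{\Pi_n},
\]
so $M_n$ is a martingale up to a bounded-variation perturbation from $R_n$. Using condition (4) together with $\gamma>1/2$, the quadratic variation $\sum_n \Pi_n^{-2}\,\mathbb{E}((\theta\cdot\eps(n))^2\mid\mathcal{F}_n)\lesssim \sum_n n^{-2\gamma}\Pi_n^{-2}$ is finite, and on $A$ the perturbation $\sum_n |R_n|/\Pi_n$ is also finite because $\|z(n)-p\|\to 0$. Hence $M_n$ converges almost surely on $A$ to some limit $M_\infty$.

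On $A\cap\{Y_n\to 0\}$, one must have $M_\infty=0$, so it remains to show $\mathbb{P}(M_\infty=0,\;A)=0$. Fix a large $n_0$ and condition on $\mathcal{F}_{n_0}$; write $M_\infty=M_{n_0}+W$ where $W$ is determined by the future noise and trajectory. Conditions (3) and (4), combined with $\mathbb{E}(\eps(n)\mid\mathcal{F}_n)=0$, imply that the conditional law of $\theta\cdot\eps(n)$ has mass at least $\beta:=c_3/c_4>0$ on each half-line $\{\pm t>0\}$, at scale $\asymp n^{-\gamma}$. A Kolmogorov/L\'evy-type anti-concentration argument then propagates this two-sided spread through the weighted tail sum $W$: after finitely many accumulated increments, the conditional law of $W$ given $\mathcal{F}_{n_0}$ has no atoms, so $\mathbb{P}(W=-M_{n_0}\mid\mathcal{F}_{n_0})=0$. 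Since $n_0$ was arbitrary, $\mathbb{P}(M_\infty=0,\,A)=0$, whence $\mathbb{P}(z(n)\to p)=0$.

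The hard part will be the anti-concentration step: one must upgrade the one-sided lower bound in condition (3), together with the zero-mean property, to a genuine two-sided quantitative spread of each increment at the correct scale $n^{-\gamma}$, and then show that this spread survives both the weighting by $\Pi_n^{-1}$ and the summation into $W$. Pemantle's original argument does this by a careful bootstrap that reapplies the martingale convergence to a stopped version of $M_n$ and combines it with a conditional Paley--Zygmund / small-ball estimate. A secondary difficulty is to ensure that the stable-coordinate contribution and the remainders $R_n$ do not reintroduce atoms; on $A$ both are summable faster than the noise, so they contribute only a $\mathcal{F}_{n_0}$-conditional deterministic shift that leaves the non-atomicity of the conditional law of $W$ intact.
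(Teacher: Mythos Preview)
The paper does not prove this theorem at all: it is quoted verbatim from Pemantle's 1990 paper \cite{pemantle1990nonconvergence} and then applied as a black box in Lemma~\ref{nonuniform}. So there is no ``paper's own proof'' to compare your proposal against.

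That said, your sketch is a reasonable outline of the standard argument (and close in spirit to Pemantle's original proof): project onto an unstable eigendirection, rescale by the cumulative growth factor $\Pi_n$ to obtain an almost-surely convergent martingale $M_n$, and then use the noise hypothesis to rule out $M_\infty=0$. Two points deserve more care than you give them. First, when the unstable eigenvalue is complex you cannot simply ``take real/imaginary parts'' and keep the scalar recursion $Y_{n+1}=(1+\sigma_n\lambda)Y_n+\cdots$ with real $\lambda$; one must either work with a two-dimensional real block or with $|Y_n|^2$ for a complex $Y_n$, and the growth factor and remainder estimates change accordingly. Second, your claim that the remainders $R_n$ ``contribute only a $\mathcal{F}_{n_0}$-conditional deterministic shift'' is not right: $R_n$ depends on the full future trajectory $z(n)$, hence on the future noise, and is genuinely random given $\mathcal{F}_{n_0}$. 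What saves the anti-concentration argument is not that this contribution is deterministic but that it is dominated in size by the noise increments on the event $A$; Pemantle handles this by a stopping-time construction rather than by treating the remainder as a shift. These are fixable, but they are exactly the ``hard part'' you flag, and your current sketch does not yet close them.
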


The rest of this section is to verify that our $z(n)$ in (\ref{main random process}) falls into the setting of Theorem \ref{unstableanalysis} with $p=(U,U)$. First it is easy to check that $p$  is a critical point of the vector field $F$ in (\ref{definition F, part  1}), i.e. $F(U,U)=0$. After introducing the following definition, we will show $p$ is a linearly unstable critical point.

\begin{definition}
Let $T$ be the linear approximation to some vector field $F$ near a critical point $p$ so that $F(p+w)=T(w)+O(|w|^2)$, then
\begin{enumerate}[(a)]
\item If all the eigenvalues of $T$ have strictly negative real part, $p$ is called an attracting point.
\item If some eigenvalues of $T$ have strictly positive real part, $p$ is called an unstable critical point.
\end{enumerate}
\end{definition}

\begin{lemma} \label{typeoffixedpoint}
\begin{enumerate}
\item When $\alpha>1$, $(U,U)$ is a linearly unstable critical point for $F$ in (\ref{definition F, part  1}).
\item When $\alpha<1$, $(U,U)$ is an attracting  point for $F$ in (\ref{definition F, part  1}).
\end{enumerate}
\end{lemma}

\begin{proof}
By Taylor expansion of $F$ in a neighborhood of $(U,U)$, we have
\[F(p+w)=\left. DF\right|_p\cdot w+O(|w|^2),
\]
where $DF$ is Jacobian matrix and $w$ is some vector in a neighborhood of 0 (2d dimensional vector).
By direct calculation, one can have
\[
\left. DF\right|_p=\left(\begin{array}{cccc|cccc}
-1     &  0     & \ldots & 0        &       -\alpha+\frac{\alpha}{d}  & \frac{\alpha}{d} &  \ldots   & \frac{\alpha}{d} \\
0      & -1     & 0      & \ldots   &       \frac{\alpha}{d}          & -\alpha+\frac{\alpha}{d}     & \frac{\alpha}{d}    & \ldots      \\
\vdots & \vdots & \ddots & \vdots   &       \vdots                    & \vdots & \ddots              & \vdots    \\
0  & \ldots     &    0   & -1       &        \frac{\alpha}{d}         & \ldots & \frac{\alpha}{d}    &   -\alpha+\frac{\alpha}{d}       \\ \hline
-\alpha+\frac{\alpha}{d}  & \frac{\alpha}{d} &  \ldots   & \frac{\alpha}{d}  &   -1   &  0 & \ldots & 0  \\
\frac{\alpha}{d}          & -\alpha+\frac{\alpha}{d}     & \frac{\alpha}{d}    & \ldots            &   0   & -1 & 0 & \ldots   \\
\vdots                    & \vdots & \ddots              & \vdots     &  \vdots & \vdots & \ddots & \vdots  \\
 \frac{\alpha}{d}         & \ldots & \frac{\alpha}{d}    &   -\alpha+\frac{\alpha}{d}  & 0  & \ldots &  0 & -1 \\
\end{array}
\right).
\]
In order to get all the eigenvalues of $\left. DF\right|_p$, one need to solve
\begin{equation} \label{unstableeigenvalue}
\left|\left. DF\right|_p-\lambda I_{2d\times 2d}\right|=0,
\end{equation}
where $|\cdot|$ represents the matrix's determinant and $I_{2d\times 2d}$ is a $2d$ dimensional identity matrix. Notice that  $\left. DF\right|_p$ has the same upper-right and lower-left block matrix. We set this $d\times d$ matrix as
\[B=\begin{pmatrix}
-\alpha+\frac{\alpha}{d}  & \frac{\alpha}{d} &  \ldots   & \frac{\alpha}{d}  \\
\frac{\alpha}{d}          & -\alpha+\frac{\alpha}{d}     & \frac{\alpha}{d}    & \ldots            \\
\vdots                    & \vdots & \ddots              & \vdots     \\
 \frac{\alpha}{d}         & \ldots & \frac{\alpha}{d}    &   -\alpha+\frac{\alpha}{d}
\end{pmatrix}.
\]
Because the sum of $B$'s each row is zero, $|B|=0$. Then one can easily check $\lambda=-1$ is a solution to (\ref{unstableeigenvalue}).  Now assume $\lambda\neq -1$.
By Schur complement, from (\ref{unstableeigenvalue}) we get
\begin{equation} \label{unstableeigenvalue2}
\left|B^2-(\lambda+1)^2I_{d\times d}\right|=0,
\end{equation}
where $I_{d\times d}$ is a $d$ dimensional identity matrix. (\ref{unstableeigenvalue2}) is equivalent to
\begin{equation} \label{unstableeigenvalue3}
\left|B-(\lambda+1)I_{d\times d}\right|\cdot\left|B+(\lambda+1)I_{d\times d}\right|=0.
\end{equation}
Under the assumption $\lambda\neq -1$, we easily get all the solutions to (\ref{unstableeigenvalue3}): $ \lambda=-1\pm \alpha$. Hence all the eigenvalues of $\left. DF\right|_p$ without counting multiplicities are
\[-1, -1\pm \alpha.
\]
So if $\alpha>1$, $\left. DF\right|_p$ has the positive eigenvalue $-1+\alpha$; If $\alpha<1$, all of its eigenvalues are strictly negative. This concludes the proof.
\end{proof}

Clearly, $u_n$ in (\ref{definition u_n}) satisfies condition (3) of Theorem \ref{unstableanalysis}. It remains to check condition (2), which is the following lemma.

\begin{lemma} \label{condition3}
In a small neighborhood of $p=(U,U)$, there exists some constant $c>0$, s.t.
$\mathbb{E}((u_n\cdot\theta)^+|\mathcal{F}_n)\ge c$ for every unit vector $\theta=(\theta_k)_{1\le k\le 2d} \in TD$.
\end{lemma}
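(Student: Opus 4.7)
The plan is to factor out the deterministic rate $1/(n+1+N_0)$ in $\eps(n)$ and reduce the bound to a uniform lower bound on a conditional variance. Set $\xi(n) := (n+1+N_0)\eps(n) = (\xi^X(n),\xi^Y(n))$, where $\xi_i^X(n) = 1_{X_{n+1}=i} - \pi_i(y(n))$ and $\xi_i^Y(n) = 1_{Y_{n+1}=i} - \pi_i(x(n))$. For $\theta = (\theta^X,\theta^Y) \in TD$ with $\Vert\theta\Vert = 1$, decompose $\xi(n)\cdot\theta = A_n + B_n$ where $A_n := \xi^X(n)\cdot\theta^X$ depends only on $X_{n+1}$ and $B_n := \xi^Y(n)\cdot\theta^Y$ depends only on $Y_{n+1}$. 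Assuming $X_{n+1}$ and $Y_{n+1}$ are conditionally independent given $\mathcal{F}_n$ (as is natural for the two-particle model defined by the marginal transition laws (\ref{completemeasureXmove})--(\ref{completemeasureYmove})), $A_n$ and $B_n$ are conditionally independent, and by (\ref{zeroerror}) each is conditionally centered. So it suffices to bound $\mathbb{E}((A_n + B_n)^+ \mid \mathcal{F}_n)$ below by an absolute positive constant on some small neighborhood $\mathcal{N}$ of $p = (U,U)$.

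Centering gives $\mathbb{E}((A_n+B_n)^+ \mid \mathcal{F}_n) = \tfrac{1}{2}\mathbb{E}(|A_n+B_n| \mid \mathcal{F}_n)$, which I combine with the elementary inequality $\mathbb{E}|Z| \ge \mathbb{E}(Z^2)/\Vert Z\Vert_\infty$ valid for any bounded $Z$. Since $\Vert\theta\Vert = 1$ and $\pi_i \in [0,1]$, $|A_n+B_n|$ is almost surely bounded by an absolute constant $C$. Thus it remains to lower bound
\[\mathbb{E}((A_n+B_n)^2 \mid \mathcal{F}_n) = \mathrm{Var}(A_n \mid \mathcal{F}_n) + \mathrm{Var}(B_n \mid \mathcal{F}_n),\]
where the equality uses conditional independence. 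Because $\Vert\theta^X\Vert + \Vert\theta^Y\Vert = 1$, one of the two norms is at least $1/2$; by symmetry assume $\Vert\theta^X\Vert \ge 1/2$ and focus on $\mathrm{Var}(A_n \mid \mathcal{F}_n)$.

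For that, invoke the identity $\mathrm{Var}(A_n \mid \mathcal{F}_n) = \tfrac{1}{2}\sum_{j,k}\pi_j(y(n))\pi_k(y(n))(\theta^X_j-\theta^X_k)^2$ together with the uniform lower bound $\pi_j(y(n)) \ge c_0 > 0$ for $(x,y) \in \mathcal{N}$ (from continuity of $\pi$ on $\Delta^\circ$ and $\pi_j(U) = 1/d$). Since $\sum_j \theta^X_j = 0$, expanding yields $\sum_{j,k}(\theta^X_j-\theta^X_k)^2 = 2d\Vert\theta^X\Vert_2^2$, so
\[\mathrm{Var}(A_n \mid \mathcal{F}_n) \ge c_0^2\, d\, \Vert\theta^X\Vert_2^2 \ge c_0^2\, \Vert\theta^X\Vert^2 \ge c_0^2/4,\]
where the middle inequality is the Cauchy--Schwarz bound $\Vert\theta^X\Vert^2 \le d\Vert\theta^X\Vert_2^2$. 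Assembling the pieces and re-inserting the factor $1/(n+1+N_0)$ gives $\mathbb{E}((\eps(n)\cdot\theta)^+ \mid \mathcal{F}_n) \ge c/n$ with $c = c_0^2/(8C)$, uniformly in the unit vector $\theta \in TD$.

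The only delicate point---and the reason for working in a small $\mathcal{N}$---is ensuring the uniform bound $\pi_j(y(n)) \ge c_0$: one must restrict $\mathcal{N}$ to the region where $\pi$ is smooth, i.e. away from the kink at $y_i = \delta$ in the definition of $f$. Since $p=(U,U)$ lies in $\Delta^\circ \times \Delta^\circ$ with all coordinates equal to $1/d$ and $\delta$ is small, this is automatic once $\mathcal{N}$ has radius less than $1/d - \delta$, and on such a neighborhood $\pi_j$ is a smooth, strictly positive function.
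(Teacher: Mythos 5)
Your proof is correct, and it takes a genuinely different route from the paper's. The paper argues by isolating a single ``good'' joint event: it shows $\max_k \theta_k \ge ((d-1)\sqrt{2d})^{-1}$ for any unit $\theta \in TD$ (using $\sum_{k\le d}\theta_k = \sum_{k>d}\theta_k = 0$), picks $j_0$ with $\theta_{j_0+d}\ge 0$, and bounds the conditional first moment from below by $\mathbb{P}(X_{n+1}=1, Y_{n+1}=j_0)\,\theta_1$, after observing (somewhat informally, via ``$\sim 0$'') that the centering terms $\sum_m \pi_m(y(n))\theta_m$ and $\sum_p \pi_p(x(n))\theta_{p+d}$ vanish near $(U,U)$ because $\pi$ is close to $U$ and $\theta$ has zero column sums. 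You instead run a second-moment argument: centering gives $\mathbb{E}(Z^+) = \tfrac12\mathbb{E}|Z|$, then $\mathbb{E}|Z|\ge \mathbb{E}(Z^2)/\|Z\|_\infty$, and conditional independence of the two coordinates given $\mathcal{F}_n$ (which the paper also implicitly uses when it factorizes $\mathbb{P}(X_{n+1}=1, Y_{n+1}=j_0)$) splits $\mathbb{E}(Z^2|\mathcal{F}_n)$ into two conditional variances, one of which is bounded below using the exchange-form variance identity, the lower bound $\pi_j \ge c_0$ near $(U,U)$, and the zero-sum identity $\sum_{j,k}(\theta^X_j-\theta^X_k)^2 = 2d\|\theta^X\|_2^2$. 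Both arguments exploit the same two structural facts---$\pi$ bounded away from zero near the uniform point, and the zero-sum constraints on $TD$---but yours replaces the paper's informal asymptotic step with an exact variance computation and so is tighter and easier to make fully rigorous; the paper's version is more concrete about which transition actually produces the positive part. One small remark: the smoothness of $\pi$ away from the kink at $\delta$ is irrelevant here; all you need is the uniform positive lower bound on $\pi_j$, which already follows from continuity of $\pi$ and $\pi_j(U)=1/d$.
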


\begin{proof}
Conditioning on $X_{n+1}=i\in V, Y_{n+1}=j\in V$, one has
\begin{eqnarray} \label{conditionalerror}
u_n\cdot \theta &=& (1-\pi_i(y(n)))\theta_i-\sum_{m\neq i, m\in V}\pi_m(y(n))\theta_m \nonumber \\
                              & & +(1-\pi_j(x(n)))\theta_{j+d}-\sum_{s\neq j, s\in V}\pi_s(x(n))\theta_{s+d} \nonumber \\
                              &=& \theta_i+\theta_{j+d}-\sum_{m\in V}\pi_m(y(n))\theta_m -\sum_{s\in V}\pi_s(x(n))\theta_{s+d}. \qquad
\end{eqnarray}
Now we will show for any unit vector  $\theta\in TD$, the following holds
\begin{equation} \label{lowerbound}
\max_{1\le k\le 2d} \theta_k \ge \frac{1}{2d(d-1)}.
\end{equation}
Observe that $\theta$ as a unit vector ($\Vert \theta \Vert=1$) always satisfies $\max_{1\le k\le 2d} |\theta_k| \ge \frac{1}{2d}$.
Then if the following inequality doesn't hold
\[\max_{1\le k\le 2d} \theta_k \ge \frac{1}{2d} \left(> \frac{1}{2d(d-1)}\right),
\]
there must be $\min_{1\le k\le 2d} \theta_k \le - 1/(2d)$.
Then it follows from  $\sum_{k=1}^d \theta_k=0$ and $\sum_{k=d+1}^{2d} \theta_k=0$ that there exists some coordinate $1\le k_0 \le 2d$, s.t.
$\theta_{k_0}\ge  1/[2d(d-1)]$, which implies (\ref{lowerbound}).
From (\ref{lowerbound}), without loss of generality, we can assume
\begin{equation} \label{assumptiontheta1}
\theta_1=\max_{1\le k\le 2d} \theta_k \ge \frac{1}{2d(d-1)}.
\end{equation}
Then by $\sum_{k=d+1}^{2d} \theta_k=0$, there also exists some $j_0\in V$, s.t. $\theta_{j_0+d}\ge 0$.

Because $(x(n),y(n)$ lives in a small neighborhood of $(U,U)$, $\pi(x(n))$ and $\pi(y(n))$ also live in a small neighborhood of $(U,U)$, and hence in (\ref{conditionalerror}),
\[\sum_{m\in V}\pi_m(y(n))\theta_m \sim 0, \sum_{s\in V}\pi_s(x(n))\theta_{s+d}\sim 0.
\]
Therefore, by (\ref{conditionalerror}),
\begin{eqnarray}
\mathbb{E}((u_n\cdot\theta)^+|\mathcal{F}_n)&\ge& \mathbb{P}(X_{n+1}=1, Y_{n+1}=j_0|\mathcal{F}_n) \theta_1  \nonumber \\
                                                         &=& \mathbb{P}(X_{n+1}=1|\mathcal{F}_n) \mathbb{P}( Y_{n+1}=j_0|\mathcal{F}_n) \theta_1.  \nonumber
\end{eqnarray}
Then by the same fact that $\pi(x(n))$ and $\pi(y(n))$ live in a small neighborhood of $(U,U)$, both $\mathbb{P}(X_{n+1}=1|\mathcal{F}_n)$ and $\mathbb{P}( Y_{n+1}=j_0|\mathcal{F}_n)$
are close to $1/d$. Then together with (\ref{assumptiontheta1}), it follows that $\mathbb{E}((u_n\cdot\theta)^+|\mathcal{F}_n)$ is uniformly bounded from below by some positive constant. This completes the proof.
\end{proof}

Now we can apply Theorem \ref{unstableanalysis}, obtaining Lemma \ref{nonuniform}.

%%%%%%%%%%%%%%%%%%%%%%%%%%%%%%%%%%%%%%%%%%%%%%%%%%%%%%%%%%%%%%%%%%%%%%%%%%%%%%%%%%%%%%%%%%
\section{Further problems}   \label{further problems}
%%%%%%%%%%%%%%%%%%%%%%%%%%%%%%%%%%%%%%%%%%%%%%%%%%%%%%%%%%%%%%%%%%%%%%%%%%%%%%%%%%%%%%%%%%

This paper is part of a project to answer the following

\begin{problem} \label{main_problem}
For the model defined in (\ref{Xmove}) and (\ref{Ymove}), what is the random walks' behavior when choosing different graphs and weights?
\end{problem}
One concrete version of the above problem can be the nearest-neighbor interacting random walks on $\Z^d$ with the weight sequence $w_k=k^{-\alpha}$ ($\alpha>0$). Problem \ref{main_problem} remains widely open in general.

Now we make a conjecture regarding the model in (\ref{completemeasureXmove}) and (\ref{completemeasureYmove}) we have studied.
\begin{conjecture}
For any positive integer $d\ge 3$, $\alpha_c=1$ is critical, i.e. for any given initial condition $(x(0),y(0))\in D$,
\begin{enumerate}
\item when $\alpha>1$, there exists some constant $c=c(\alpha,d)$ (not large) depending on $\alpha$ and $d$, such that the following holds
\[\mathbb{P} \left\{ \exists n_0, \bigcap_{n\ge n_0}\left\{ \sum_{i=1}^d x_i(n)y_i(n)\le c\delta \right\} \right\}=1.
\]
\item when $0<\alpha<1$, the following holds
\[\mathbb{P} \left\{ \lim_{n\to \infty}z(n)=(U,U) \right\}=1.
\]
\end{enumerate}
\end{conjecture}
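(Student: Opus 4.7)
The plan is to follow the dynamical-systems pipeline of Section 4 --- asymptotic-pseudotrajectory theory combined with a Lyapunov analysis of the averaged ODE $\dot\Xi=F(\Xi)$ --- and to treat the two halves of the conjecture separately, since they correspond to the linearly stable ($\alpha<1$) and linearly unstable ($\alpha>1$) regimes at the symmetric fixed point $p=(U,U)$ identified in Proposition \ref{typeoffixedpoint}.

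For Part (2), the first task is to identify the critical set of $F$ in $D$. Since $\varpi>0$ excludes fixed points on $\partial D$, this reduces to classifying fixed points of $\pi\circ\pi$ in $\Delta^\circ$, and for $\alpha<1$ the contractive behaviour of $\pi$ around $U$ should force uniqueness of $p$ via a monotonicity or degree argument. The second task is to build a global strict Lyapunov function on $D^\circ$; a natural candidate is $V(u,v)=\|u-U\|^2+\|v-U\|^2$, whose derivative along trajectories is $-2\|u-U\|^2-2\|v-U\|^2+2(u-U)\cdot(\pi(v)-U)+2(v-U)\cdot(\pi(u)-U)$, and which is negative off $p$ once a global contraction estimate $\|\pi(w)-U\|\le\alpha\|w-U\|$ is established by integrating $D\pi$ along the segment from $U$ to $w$. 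A non-attainment argument for $\partial D$ modelled on Lemma \ref{condition3} combined with Pemantle's technique then keeps $z(n)$ in the interior, and Lemma \ref{samelimit}, Lemma \ref{bridge}, and Theorem \ref{Benaimconvergence} together yield $L(\{z(n)\})=\{p\}$ almost surely.

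For Part (1), the statement is Theorem \ref{maintheorem1} upgraded from ``$\alpha$ sufficiently large'' to every $\alpha>1$. Inspecting the proof in Section 4, the only obstruction is Lemma \ref{derivativenegative}, specifically the condition $4-3d^{1/(\alpha+1)}>0$ used in Cases (2) and (3); the non-convergence Lemma \ref{nonuniform} and the chain-recurrence Lemma \ref{limitrecurrent} extend to all $\alpha>1$ without change. My plan is therefore to replace $H(u,v)=\sum_iu_iv_i$ by a finer functional: either $H_\beta(u,v)=\sum_i(u_iv_i)^\beta$ with $\beta=\beta(\alpha,d)\in(0,1]$ tuned so that the Case (2)--(3) bounds retain a strict-decrease margin for every $\alpha>1$, or the ``joint-support'' functional $\sum_i\min(u_i,v_i)$, which more directly captures the quantity being controlled. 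Once monotonicity of the new functional is established outside $S^\delta\cup\{p\}$, the subsequent arguments assembling Theorem \ref{maintheorem1} go through verbatim.

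The main obstacle lies in Part (1): producing a Lyapunov functional strictly decreasing outside $S^\delta\cup\{p\}$ for \emph{every} $\alpha>1$ and $d$. The paper already cautions that the current $H$ is not a classical Lyapunov function, and the Case (2)--(3) estimate in Lemma \ref{derivativenegative} is tight in exactly the factor that rules out small $\alpha>1$, so a purely analytical tightening of $H$ seems unlikely without additional structure. If no single functional works, the fallback is a global phase-portrait analysis: use the linear instability at $p$ to identify $W^s(p)$ as a codimension-$(d-1)$ submanifold, then show via a LaSalle-type argument that every trajectory in $D^\circ\setminus W^s(p)$ enters $S^\delta$ in finite time, which in turn requires a classification of $\omega$-limit sets of $F$ on $D$. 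For Part (2), the subtlest point is controlling the trajectory near $\partial D$, where $F$ is only Lipschitz rather than $C^2$; the inward push of Proposition \ref{proposition10} combined with a supermartingale estimate should establish non-attainment, but the technical bookkeeping needs care.
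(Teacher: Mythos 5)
This statement is a \emph{conjecture} in the paper (Section 5, ``Further questions''), not a proven theorem, so there is no paper proof to compare against. What the paper actually proves (Theorem \ref{maintheorem1}) is the weaker assertion that the joint support becomes small for ``$\alpha$ large enough,'' where ``large enough'' means both that Proposition \ref{proposition1} holds and that $\alpha>\log d/\log\frac{4}{3}$. Your proposal is therefore an attempt at an open problem, and it should be read in that spirit.

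On Part (2), the key quantitative step you rely on --- a global contraction estimate $\|\pi(w)-U\|\le\alpha\|w-U\|$ on all of $\Delta$, obtained ``by integrating $D\pi$ along the segment from $U$ to $w$'' --- is false. The Jacobian bound $\|D\pi\|\le\alpha$ holds only at $U$: already for $d=2$ one computes, on the tangent line, $|\pi_1'|=\alpha\,w_1^{-\alpha-1}w_2^{\alpha-1}/\bigl[(w_1/w_2)^{-\alpha}+1\bigr]^2$, which for $\alpha<1$ diverges like $w_2^{\alpha-1}$ as $w_2\to0$. Even with the $\delta$-cutoff the best one can get is a contraction constant approaching $1$ near the corners of $\Delta$: at $w=(1,0)$ with small $\delta$ one has $|\pi_1(w)-\tfrac12|/|w_1-\tfrac12|=\frac{1-\delta^\alpha}{1+\delta^\alpha}\to1$. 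Feeding the constant $1$ rather than $\alpha$ into your Cauchy--Schwarz bound gives $\dot V\le-2(\|u-U\|-\|v-U\|)^2$, which is not strictly negative away from $(U,U)$, so the functional $V(u,v)=\|u-U\|^2+\|v-U\|^2$ does not close the argument. You would need to exploit the sign information in the cross terms $(u-U)\cdot(\pi(v)-U)$ rather than majorise their absolute value, and it is not clear how to do that uniformly over $D$. Uniqueness of the fixed point of $\pi\circ\pi$ for $\alpha<1$, which you invoke via ``a monotonicity or degree argument,'' is likewise not established and is not obvious.

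On Part (1), your diagnosis of the bottleneck is correct: the proof of Theorem \ref{maintheorem1} uses $\alpha$ large only through Proposition \ref{proposition1} (which needs $\alpha>\alpha(d)$) and through the margin $4-3d^{1/(\alpha+1)}>0$ in Cases (2)--(3) of Lemma \ref{derivativenegative}, while Lemmas \ref{nonuniform} and \ref{limitrecurrent} go through for any $\alpha>1$. But ``replace $H$ by $H_\beta=\sum_i(u_iv_i)^\beta$ or by $\sum_i\min(u_i,v_i)$'' is a wish, not a proof: you give no derivative computation showing that either candidate is monotone along the flow outside some $S^\delta\cup\{(U,U)\}$ for all $\alpha>1$, and the paper already warns that the estimates behind Lemma \ref{derivativenegative} are tight in exactly the factor that forces $\alpha$ large. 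The fallback via a global phase-portrait analysis (classifying $\omega$-limit sets and showing $W^s(U,U)$ is codimension $\ge1$) would indeed be a different route, but it is stated only as a possibility and involves substantial new work --- in particular controlling the non-smooth vector field near $\partial D$ and ruling out non-trivial invariant sets in $D\setminus(S^\delta\cup W^s)$ --- that is nowhere carried out. In short, the proposal is a reasonable research program, but at present both halves contain gaps that would have to be filled before this could be called a proof of the conjecture.
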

Another problem of interest is
\begin{problem} \label{originalquestion}
When $\delta=0$ in (\ref{completemeasureXmove}) and (\ref{completemeasureYmove}), how to derive the similar result as Theorem \ref{main theorem}?
\end{problem}

%%%%%%%%%%%%%%%%%%%%%%%%%%%%%%%%%%%%%%%%%%%%%%%%%%%%%%%%%%%%%%%%%%%%%%%%%%%%%%%%%%%%%%%%%%
\section{Acknowledgements}
%%%%%%%%%%%%%%%%%%%%%%%%%%%%%%%%%%%%%%%%%%%%%%%%%%%%%%%%%%%%%%%%%%%%%%%%%%%%%%%%%%%%%%%%%%

This work was supported by the ISF. The author thanks Itai Benjamini for his very stimulating  questions and Gady Kozma for reading the draft and giving many comments. The author is indebted to Ofer Zeitouni for his patient guidance and numerous insights on this work. The author is also thankful to Omri Sarig and Yuri Lima for all the discussions on dynamical systems.

\bibliography{complete_graph_4_bib}

\end{document}